\apptocmd{\sloppy}{\hbadness 10000\relax}{}{}
\newcommand{\tablabel}[1]{\label{tab:#1}}
\newcommand{\tabref}[1]{\ref{tab:#1}}
\newcommand{\Tabref}[1]{Table~\tabref{#1}}
\renewcommand{\sth}{:}
\title{Bisimplicial Complexes and Asphericity}
\author{Nima Hoda}
\address{Instytut Matematyczny,
	Uniwersytet Wroc\l awski\\
	pl.\ Grun\-wal\-dzki 2/4,
	50--384 Wroc{\l}aw, Poland}
\email{nima.hoda@mail.mcgill.ca}
\date{\today}
\keywords{quadric complex, bisimplex, bisimplicial complex,
  asphericity, contractibility, dismantlability, combinatorial sphere}
\subjclass[2010]{Primary 20F65, 
  20F67, 
  55P20; 
  Secondary 57Q15, 
  05C12} 
\newcommand{\nabladelta}{\protect\substack{\nabla \vspace{-1.1pt} \\
    \hspace{0.4pt}\Delta}}
\newcommand{\bs}{\raisebox{1.5pt}{%
    \protect\scalebox{1}[0.75]{\ensuremath{\nabladelta}}}}
\DeclareMathOperator{\lk}{lk}
\DeclareMathOperator{\CAT}{CAT}
\newcommand{\join}{\bowtie}
\begin{document}

\begin{abstract}
  We present a discrete Morse-theoretic method for proving that a
  regular CW complex is homeomorphic to a sphere.  We use this method
  to define \texorpdfstring{\emph{bisimplices}}{bisimplices}, the
  cells of a class of regular CW complexes we call
  \texorpdfstring{\emph{bisimplicial complexes}}{bisimplicial
    complexes}.  The \texorpdfstring{$1$-skeleta}{1-skeleta} of
  bisimplices are complete bipartite graphs making them suitable in
  constructing higher dimensional skeleta for bipartite graphs.  We
  show that the flag bisimplicial completion of a finite bipartite
  bi-dismantlable graph is collapsible.  We use this to show that the
  flag bisimplicial completion of a quadric complex is contractible
  and to construct a compact \texorpdfstring{$K(G,1)$}{K(G,1)} for a
  torsion-free quadric group \texorpdfstring{$G$}{G} .
\end{abstract}

\maketitle

\tableofcontents

\section{Introduction}

CW complexes are typically constructed by gluing together Euclidean
polyhedra along faces.  A \defterm{Euclidean polyhedron} is the convex
hull of a finite point set in a Euclidean space, e.g., simplices and
cubes.  However, not all CW structures on cells of a CW complex arise
as Euclidean polyhedra \cite{Kalai:1988} and for some applications it
is natural to use nonpolyhedral cells.  In this paper we construct an
infinite family of nonpolyhedral CW balls called \emph{bisimplices}.
The $1$-skeleta of bisimplices are connected complete bipartite graphs
and so we consider them as bipartite analogs of simplices.  Our
motivation for this construction is to find a natural contractible
higher dimensional skeleton for quadric complexes.

\newterm{Quadric complexes} are locally finite simply connected square
complexes satisfying a certain combinatorial nonpositive curvature
condition.  A group is \newterm{quadric} if it acts properly and
cocompactly on a quadric complex.  Quadric complexes are examples of
the generalized $(4,4)$-complexes of Wise \cite{Wise:2003} and were
first studied in detail in the context of geometric group theory by
the present author \cite{Hoda:2017}.  They generalize the folder
complexes of Chepoi \cite{Chepoi:2000} and may be considered as square
analogs of the $2$-skeleta of systolic complexes
\cite{Januszkiewicz:2006}.  They can be characterized by their
$1$-skeleta, which are precisely the hereditary modular graphs of
metric graph theory \cite{Bandelt:1988}.

\subsection{Summary of Results}

\begin{figure}
  \centering
  \begin{tikzcd}
    \vcenter{\hbox{\includegraphics[scale=0.5]{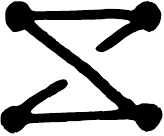}}}
    \arrow[d,dashed,"\substack{\, \\ \text{span} \\ \text{\&} \\ \text{cone} \\ \,}"']
    &\vcenter{\hbox{\includegraphics[scale=0.5]{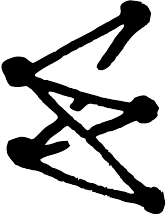}}}
    \arrow[d,dashed,"\substack{\, \\ \text{span} \\ \text{\&} \\ \text{cone} \\ \,}"']
    & \cdots
    &\vcenter{\hbox{\includegraphics[scale=0.4]{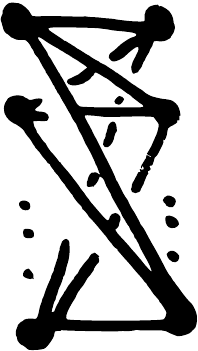}}}
    \arrow[d,dashed,"\substack{\, \\ \text{span} \\ \text{\&} \\ \text{cone} \\ \,}"']
    \\
    \vcenter{\hbox{\includegraphics[scale=0.5]{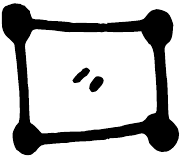}}}
    &\vcenter{\hbox{\includegraphics[scale=0.35]{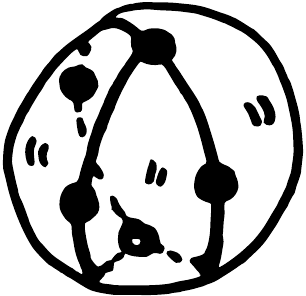}}}
    & \cdots
    & \vcenter{\hbox{\includegraphics[scale=0.3]{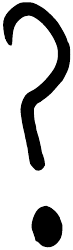}\includegraphics[scale=0.3]{figsdrawn/qmark.pdf}\includegraphics[scale=0.3]{figsdrawn/qmark.pdf}}}
  \end{tikzcd}
  \caption{Bisimplices are essentially constructed by starting with a
    $K_{m,n}$, $m,n \ge 2$, inductively spanning a biclique on each
    proper $K_{m',n'}$ subgraph, $m',n' \ge 2$, and then taking the
    cone of the result.  The difficulty lies in showing that the base
    of this cone is homeomorphic to $\Sph^{m+n-3}$ and hence that the
    cone has the structure of a regular CW complex with a single top
    dimensional cell of dimension $m+n-2$.  This is trivial for
    $(m,n)$ equal to $(2,2)$ or $(2,3)$, as seen in the figure.  To
    prove it for general $(m,n)$ is not quite so easy.}
  \figlabel{lowdbs}
\end{figure}

In contrast to simplices which are indexed by dimension, bisimplices
are indexed by two natural numbers $m,n \ge 1$.  For each dimension
$d \ge 2$ there are $\bigl\lceil\frac{d-1}{2}\bigr\rceil$ bisimplices
of dimension $d$.  Recall that a CW complex is \newterm{regular} if
the characteristic maps of its cells are injective.  See
\Figref{lowdbs}.

\begin{mainthm}[\Thmref{bsbd}]
  \mainthmlabel{ibisimplices} There exists a family
  $\{ \bs^{m,n} \}_{m,n \ge 1}$ of regular CW complexes called
  bisimplices satisfying the following conditions.
  \begin{itemize}
  \item $\bs^{m,n}$ has a unique maximal cell and so $\bs^{m,n}$ is
    homeomorphic to a ball.
  \item $\bs^{m,n}$ has dimension $m+n$.
  \item The $1$-skeleton of $\bs^{m,n}$ is the complete bipartite
    graph $K_{m+1,n+1}$.
  \end{itemize}
\end{mainthm}

Moreover, the cells of a bisimplex $\bs$ are also bisimplices and
these cells are precisely the full subcomplexes of $\bs$, aside from a
few degenerate cases such as the $K_{0,\ell}$ and $K_{1,\ell}$
subgraphs of the $1$-skeleton.  We consider vertices and edges to be
bisimplices also.  These properties uniquely determine the cell posets
of the bisimplices.  However, proving that this family of posets is
indeed a family of cell posets is not at all trivial and is an
interesting application of the discrete Morse theory of Forman
\cite{Forman:1998} and the Generalized Poincar{\'e} Conjecture.
Specifically, we prove \Mainthmref{ibisimplices} by inductively applying
the following theorem, which we expect to have applications elsewhere.

\begin{mainthm}[\Thmref{plsphere} and \Rmkref{lpsphere}]
  \mainthmlabel{ilpsphere} Let $P$ be a poset such that the order
  complexes of the under sets of $P$ are PL-triangulated spheres.  If
  $P$ and all of its over sets admit spherical matchings then the
  order complex of $P$ is a PL-triangulated sphere.
\end{mainthm}

A \newterm{spherical matching} is a combinatorial structure on the
Hasse diagram of the cell poset of a regular CW complex $X$.  This
combinatorial structure is essentially a discrete Morse function on
$X$ having exactly two critical cells and so, by the Sphere Theorem of
Forman \cite{Forman:1998}, implies that $X$ is homotopy equivalent to
a sphere.  See \Secref{forman} for an introduction to discrete Morse
theory and the definition of spherical matching.  The other
terminology of \Mainthmref{ilpsphere} is defined in \Secref{cellposet}.

Having constructed the family of bisimplices, we may construct regular
CW complexes having bisimplices as cells.  We call these
\newterm{bisimplicial complexes} when the intersection of any two
bisimplices is a full subcomplex.  Given a bipartite graph $\Gamma$
there is a natural bisimplicial complex $\bs(\Gamma)$ called the
\newterm{flag bisimplicial completion} of $\Gamma$.  The flag
bisimplicial completion is defined analogously to the flag simplicial
completion, also known as the clique complex, of a graph.

Our primary motivation for the definition of the flag bisimplicial
completion is to apply it to the bipartite $1$-skeleta of quadric
complexes.  \defterm{Quadric complexes} may be defined as simply
connected $2$-dimensional CW complexes whose minimal area disk
diagrams are $\CAT(0)$ square complexes.  We would like a natural way
to glue higher dimensional cells to a quadric complex to obtain a
contractible supercomplex.  The $1$-skeleton of a quadric complex $X$
is bipartite and may contain $K_{2,3}$ so it is not possible to extend
$X$ simplicially or cubically.  However, $X$ equals the $2$-skeleton
of $\bs(X^1)$, so a natural candidate for a contractible supercomplex
is $\bs(X^1)$.

\begin{mainthm}[\Thmref{quadcont}]
  \mainthmlabel{ifcquadcont} Let $X$ be a nonempty quadric complex.  Then
  the flag bisimplicial completion $\bs(X^1)$ is contractible.
\end{mainthm}

Metric balls in $X^1$ induce finite quadric subcomplexes of $X$ and
finite quadric complexes have bi-dismantlable $1$-skeleta
\cite{Bandelt:1988, Hoda:2017}.  A finite bipartite graph is
\newterm{bi-dismantlable} if it can be reduced to a nonempty connected
complete bipartite graph by successively deleting a vertex whose
neigbourhood is contained in the neighborhood of another vertex.
\Mainthmref{ifcquadcont} then follows from \Mainthmref{idismcont}
below whose proof is another application of the discrete Morse theory
of Forman.

\begin{mainthm}[\Thmref{dismclps}]
  \mainthmlabel{idismcont} Let $X$ be a flag, nonempty finite bisimplicial
  complex.  If $X^1$ is bipartite and bi-dismantlable then $X$ is
  collapsible.
\end{mainthm}

This method of proving contractibility mirrors that of Chepoi and
Osajda for weakly systolic complexes \cite{Chepoi:2015} via
\newterm{LC-contractibility} \cite{Civan:2007, Matousek:2008}.

As pointed out to the present author by Damian Osajda, a quadric
complex $X$ may also naturally be made contractible by extending each
connected complete bipartite subgraph of $X^1$ to a complete subgraph
and then taking the flag simplicial completion of the resulting graph.
However, this operation preseves neither the $1$-skeleton nor the
$2$-skeleton of $X$.  Moreover, the resulting complex has higher
dimension than the flag bisimplicial completion $\bs(X^1)$.

If $X$ is a compact locally quadric complex, the construction of the
bisimplicial completion of the universal cover $\univcov X$ has a
corresponding construction in the base.  We obtain from $X$ a compact
complex $X^{+}$ whose $2$-skeleton is $X$ and whose higher cells are
obtained by successively gluing in higher dimensional bisimplices
along immersions of their boundaries.  Then applying
\Mainthmref{ifcquadcont} we obtain the following.

\begin{mainthm}[\Thmref{quadkgone}]
  \mainthmlabel{ikgone} Let $X$ be a compact locally quadric complex.  If
  $\pi_1(X)$ is torsion-free then $X^{+}$ is a compact
  $K(\pi_1(X),1)$.
\end{mainthm}

Note that $\pi_1(X)$ in \Mainthmref{ikgone} is torsion-free if and only if
the automorphism group of every immersion of the $2$-skeleton of a
bisimplex into $X$ is trivial.  This is a consequence of the invariant
biclique theorem for quadric complexes \cite{Hoda:2017}.  Moreover,
every torsion-free quadric group is the fundamental group of some
locally quadric complex.

\subsection{Structure of the Paper}

In \Secref{cellposet} we give some background on posets, regular CW
complexes and PL-triangulated spheres.  In \Secref{forman} we present
basic theorems of the discrete Morse theory of Forman.  We apply these
theorems and the topological Generalized Poincar{\'e} Conjecture to
prove a discrete Morse-theoretic sphere recognition theorem.  We use
our sphere recognition theorem in \Secref{bisimplex} to construct the
infinite family of bisimplices.  We then prove some basic facts about
bisimplices.  In \Secref{bscomplex} we introduce bisimplicial
complexes and prove that flag finite bisimplicial complexes with
dismantlable $1$-skeleta are collapsible, again making use of discrete
Morse theory.  Finally, in \Secref{quadric} we prove that the flag
bisimplicial completion of a quadric complex is contractible and
describe how to construct a $K(G,1)$ for a torsion-free quadric group
$G$.

\subsection*{Acknowledgements}
The author would like to acknowledge Daniel T. Wise for many
invaluable discussions throughout the course of this research and for
his encouragement from its outset.  Thanks also to Piotr Przytycki for
a very detailed list of comments and corrections for a version of this
paper included in the author's thesis.  This research was supported in
part by an NSERC Postgraduate Scholarship, a Pelletier Fellowship, a
Graduate Mobility Award, an ISM Scholarship and the NSERC grant of
Daniel T. Wise.

\section{Posets and Regular CW Complexes}
\seclabel{cellposet}

Let $P$ be a poset.  The \defterm{covering relation} $C_P$ on $P$ is
the following binary relation.
\[ C_P(x,y) \quad \Longleftrightarrow \quad \text{$x < y$ and there is
    no $z$ satisfying $x<z<y$} \] A poset $P$ is \defterm{graded} if
every element $x \in P$ is assigned a grade $|x| \in \N$ such that the
following conditions hold.
\begin{align*}
  C_P(x,y) \quad &\Longrightarrow \quad |x| + 1 = |y| \\
  x < y \quad &\Longrightarrow \quad |x| < |y|
\end{align*}

Let $P$ be a poset.  For $x \in P$, the \defterm{over set} $O_x$ and
\defterm{under set} $U_x$ \defterm{of} $P$ \defterm{at} $x$ are the
following subsets of $P$.
\[ O_x = \{y \in P \sth y > x \} \qquad U_x = \{y \in P \sth y < x
\} \]
We may write $O^P_x$ and $U^P_x$ if the poset is not clear from the
context.  Note that making the inequalities in the definitions of
$O_x$ and $U_x$ nonstrict would give what are usually referred to as
the \emph{principal ideal} and \emph{principal filter} having
\emph{principal element} $x$.  For $x,y \in P$, the \defterm{strict
  interval} $(x,y)$ of $P$ \defterm{between} $x$ and $y$ is the subset
of $P$ defined as follows.
\[ (x,y) = O_x \cap U_y \]
The over sets, under sets and strict intervals of $P$ are themselves
posets by restricting the order relation.  If $P$ is graded then the
over sets, under sets and strict intervals of $P$ are likewise
themselves graded.

Let $P$ be a poset.  The set of nonempty chains of $P$ form an
abstract simplicial complex.  Its associated simplicial complex is the
\defterm{order complex} $\Delta_P$ of $P$.

A CW complex is \defterm{regular} if the characteristic maps of its
cells are embeddings.  Let $X$ be a regular CW complex.  The cells of
$X$ are regular CW subcomplexes.  Viewing a cell $x$ as a ball, we
denote its boundary by $\bd x$ and its interior by $x^{\circ}$.  The
\defterm{$k$-skeleton} $X^k$ of $X$ is the subcomplex of $X$ formed by
the union of the cells of $X$ of dimension at most $k$.  The
\defterm{cell poset} $P_X$ of $X$ is the set of cells of $X$ ordered
by inclusion.  Cell posets are equipped with a natural grading, namely
dimension: $|x| = \dim x$.  A cell poset $P$ uniquely determines its
regular CW complex $X_P$.  The order complex of the cell poset of a
regular CW complex $X$ is isomorphic to the barycentric subdivision of
$X$.  A subset $Q$ of $P$ is the cell poset of a subcomplex of $X_P$
iff $Q$ is \defterm{downward closed}, meaning the following.
\[ \text{$x \in Q$ and $y < x$} \quad \Longrightarrow \quad y \in Q \]

The following theorem of Bj\"{o}rner characterizes the cell posets.
\begin{thm}[{Bj\"{o}rner \cite[Proposition~3.1]{Bjorner:1984}}]
  \thmlabel{cellpos} Let $P$ be a poset.  Then $P$ is a cell poset iff
  the order complexes of its under sets are homeomorphic to spheres.
\end{thm}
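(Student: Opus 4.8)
The plan is to prove both implications, with the converse resting on an induction in which the forward implication is reapplied to the strictly smaller under sets. For the forward direction, suppose $P = P_X$ for a regular CW complex $X$ and fix $x \in P$, corresponding to a cell $\sigma$ of dimension $d$. Since $X$ is regular, $\sigma$ is an embedded closed $d$-ball whose boundary $\bd \sigma$ is a regular CW subcomplex homeomorphic to $\Sph^{d-1}$, and the under set $U_x$ is precisely the cell poset of $\bd \sigma$. As the order complex of the cell poset of a regular CW complex is its barycentric subdivision, $\Delta_{U_x}$ is the barycentric subdivision of $\bd\sigma$, hence homeomorphic to $\bd \sigma \cong \Sph^{d-1}$. (Minimal $x$ give $U_x = \emptyset$ and $\Delta_\emptyset = \Sph^{-1}$, the empty complex.) This settles the forward implication.

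For the converse, assume every under set of $P$ has order complex homeomorphic to a sphere. I first note that this hypothesis is inherited: if $x < y$ then $U_x \subseteq U_y$ and the under set of $x$ computed inside $U_y$ is again $U_x$, so each under set of $P$ is itself a poset all of whose under sets are spheres. Since every $\Delta_{U_x}$ is a finite-dimensional sphere, the assignment $|x| = \dim \Delta_{U_x} + 1$ (with $|x| = 0$ for minimal $x$) is well defined, and I claim it grades $P$. To verify this I argue by induction on $|y|$: applying the inductive form of the theorem to the proper under set $U_y$ yields a regular CW complex $Y$ with $P_Y = U_y$ and $Y \cong \Delta_{U_y} \cong \Sph^{|y|-1}$, inside which, by the forward implication already proved, each $x < y$ is a cell of dimension exactly $|x|$. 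As the cell dimension grades the cell poset of $Y$, it follows that $x < y \Rightarrow |x| < |y|$ and $C_P(x,y) \Rightarrow |x| + 1 = |y|$, so $P$ is graded by $|\cdot|$.

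Now I build $X$ skeleton by skeleton. Let $X^{0}$ consist of one point per minimal element. Inductively suppose I have a regular CW complex with cell poset $\{x \sth |x| \le n-1\}$ realizing the order of $P$ on this set and whose subcomplexes correspond to its downward-closed subsets. For each $x$ with $|x| = n$, gradedness gives $U_x \subseteq \{x \sth |x| \le n-1\}$, so $U_x$ is the cell poset of a subcomplex $Y_x \subseteq X^{n-1}$; since the barycentric subdivision $\Delta_{U_x}$ of $Y_x$ is homeomorphic to $\Sph^{n-1}$, so is $Y_x$, and I attach an $n$-cell along a homeomorphism $\Sph^{n-1} \to Y_x$, which exists precisely because $Y_x \cong \Sph^{n-1}$. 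Performing this for every such $x$ produces $X^{n}$. Each attaching map is a homeomorphism onto the subcomplex $Y_x$, so its characteristic map is an embedding and the new cell is regular, and the closure of the cell meets $X^{n-1}$ exactly in $Y_x$, so the faces of $x$ are precisely the elements of $U_x$, matching the order of $P$. Taking the union over all $n$ yields a regular CW complex with $P_X \cong P$.

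The main work is the gradedness step, and within it the bootstrapping that reads off the dimension of each $x$ from the sphere $\Delta_{U_x}$ in a way compatible across the covering relation; once $P$ is graded by $|x| = \dim\Delta_{U_x}+1$ and each $Y_x$ is recognized as a genuine $(n-1)$-sphere, the skeletal attachment and the regularity and incidence checks are routine. I expect the inheritance of the hypothesis to proper under sets to be the lever that makes the induction run. Note that only topological, not PL, spheres are required here, since we merely glue a ball along a homeomorphism of its boundary.
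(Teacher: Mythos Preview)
Your argument is correct and follows the same architecture as the paper's: the forward direction is identical, and for the converse you establish a grading on $P$ and then build $X_P$ skeleton by skeleton exactly as the paper does. The one real difference is in how the grading is justified. You invoke ``the inductive form of the theorem'' on each $U_y$ to realize it as a regular CW sphere and then read off dimensions; this is valid once one notes that $x<y$ forces $|x|<|y|$ by chain extension (so the induction on $|y|$, equivalently on the height of the poset, is well-founded), but the self-referential phrasing needs that unpacking to avoid the appearance of circularity. The paper instead observes directly that since $\Delta_{U_y}\cong\Sph^{\ell}$ is a closed manifold, the triangulation is pure, so every maximal chain in $U_y$ has exactly $\ell+1$ elements; this gives $h(y)=\ell+1$ and $h(x)=\ell$ for any $x$ covered by $y$ in one line, with no recursion. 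Your route works but is heavier; the paper's buys the grading with a single purity observation rather than a nested induction.
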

\begin{proof}
  If $P$ is the cell poset of a regular CW complex $X$ then its under
  sets are the cell posets of the boundaries of its cells.  The order
  complexes of these under sets are the barycentric subdivisions of
  these cells and so are homeomorphic to spheres.

  To prove the converse, we construct $X_P$ inductively on dimension.
  Define the height function $h \colon P \to \N$ as follows.
  \[ h(x) = \max\bigl\{|C|-1 \sth \text{$C$ is a chain in $P$ with
      maximum $x$}\bigr\} \] Note that $h(x)$ is finite because,
  otherwise, the order complex of the under set at $x$ would be
  infinite dimensional.  We have that $h(x) = 0$ for minimal elements
  of $P$ and that $h(x) < h(y)$ for $x < y$.  We will show that $h$ is
  a grading on $P$.  Let $x \in P$.  Since the under set $U_x$ has
  order complex homeomorphic to a sphere $\Sph^{\ell}$, the maximal
  chains of $U_x$ must all have size $\ell+1$.  Hence $h(x) = \ell+1$
  and $h(y) = \ell$ for any element of $P$ that is covered by $x$.

  Let $P^k \subseteq P$ be defined by the following.
  \[ P^k = \{x \in P \sth h(x) \le k \} \] We will construct $X_P$
  such that its $k$-skeleton $X_P^k$ has cell poset $P^k$.  We begin
  the induction by letting $X_P^0 = P^0$.  Suppose we have constructed
  $X_P^k$ having cell poset $P^k$.  Let $x \in P$ with $h(x) = k+1$.
  Since $h$ is a grading, we have $U_x \subseteq P^k$.  So $U_x$ is
  the cell poset of a subcomplex $A$ of $X_P^k$.  The barycentric
  subdivision of $A_x$ is isomorphic to the order complex of $U_x$
  which, by hypothesis, is homeomorphic to a sphere.  This sphere has
  dimension $k$ since that is the height of a maximal chain in $U_x$.
  We construct $X_P^{k+1}$ from $X_P^k$ by attaching a $(k+1)$-ball
  along its boundary to each $A_x$ with $h(x) = k+1$.  Then $P^{k+1}$
  is the cell poset of $X_P^{k+1}$.  Having inductively defined the
  skeleta
  \[ X_P^0 \subseteq X_P^1 \subseteq X_P^2 \subseteq \cdots \] we
  obtain $X_P$ as the colimit.
\end{proof}

We consider the empty space to be the sphere of dimension $-1$.

Let $P$ be a cell poset.  The under sets of $P$ are also cell posets.
More precisely, the under set $U_x$ at a cell $x$ is the cell poset
of the regular CW complex structure on the boundary of $x$.

The order complex of the over set $O_x$ is isomorphic to the link of
the barycenter of $x$ in the barycentric subdivision of $X_P$.
However, because of the existence of homology spheres, $O_x$ need not
be a cell poset.  A homology sphere is a manifold with the homology of
a sphere but which is not homeomorphic to a sphere.  The double
suspension of a homology sphere is homeomorphic to a sphere, as first
proved in full generality by Cannon \cite{Cannon:1979}.  The
Poincar{\'e} homology sphere $X$, also known as the spherical
dodecahedron space, is a homology $3$-sphere that has a simplicial
triangulation \cite[Section~62]{Seifert:1934}.  Let $B$ be the regular
CW complex with a single cell of dimension 6 and whose boundary
$\bd B$ has the structure of the simplicial double suspension of $X$.
Let $e$ be a $1$-simplex of $B$ joining two of the suspension points
of $\bd B$.  Then the link $\lk e$ is isomorphic to $X$ and so the
over set $O_e$ of $e$ in $P_B$ is the cell poset of $X$ augmented with
a new maximum element corresponding to the top-dimensional cell of
$B$.  The under set of $B$ in $O_e$ then has order complex
homeomorphic to $X$ and not a sphere and hence $O_e$ is not a cell
poset.

If $X_P$ is a simplicial complex, then the over set $O_x$ at a cell
$x$ is also a cell poset.  In fact, $O_x$ is the cell poset of the
link $\lk x$ of $x$.  \Thmref{links} characterizes the cell posets in
which this holds.

\begin{prop}
  \proplabel{lkmfold} Let $P$ be a cell poset and suppose $X_P$ is
  connected.  If the over sets of $P$ at its minimal elements have
  order complexes homeomorphic to spheres then $X_P$ is a manifold.
\end{prop}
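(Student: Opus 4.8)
The plan is to produce an open cover of $X_P$ by charts homeomorphic to Euclidean space, one chart for each minimal element (vertex) of $P$, and to read off the local structure at each vertex from the conical structure of a regular CW complex. Recall that $X_P$ is homeomorphic to the order complex $\Delta_P$, and that for a minimal $v$ the barycenter of $v$ is $v$ itself, so the order complex $\Delta_{O_v}$ is precisely the link $\lk v$. I would use the standard fact that the closed star $\bigcup_{x \ge v} x$ of a vertex in a regular CW complex is homeomorphic to the cone on $\lk v$, carrying $v$ to the cone point; restricting to interiors, the open star $\mathrm{st}(v) = \bigcup_{x \ge v} x^{\circ}$ is then an open neighbourhood of $v$ homeomorphic to the open cone on $\Delta_{O_v}$. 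By hypothesis $\Delta_{O_v}$ is homeomorphic to a sphere $\Sph^{\,n_v}$, and the open cone on $\Sph^{\,n_v}$ is homeomorphic to $\mathbb{R}^{\,n_v+1}$, so each $\mathrm{st}(v)$ is an open Euclidean chart around $v$.

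I would next check that the charts share a common dimension and that they cover. Since $X_P$ is connected its $1$-skeleton is connected, so any two vertices are joined by an edge path; for an edge $e = \{v,v'\}$ an interior point of $e$ lies in $\mathrm{st}(v) \cap \mathrm{st}(v')$, whence invariance of domain forces $n_v = n_{v'}$, and connectedness yields a single value $n$ with $\mathrm{st}(v) \cong \mathbb{R}^n$ for every vertex $v$. The crucial observation is that the vertex stars already cover $X_P$: every point lies in the interior $x^{\circ}$ of a unique cell $x$, and since $X_P$ is regular the boundary $\bd x$ is a sphere assembled from lower-dimensional cells and so contains a vertex $v \le x$, so that $x^{\circ} \subseteq \mathrm{st}(v)$. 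Thus $\{\mathrm{st}(v)\}_{v}$ is an open cover of the Hausdorff space $X_P$ by sets homeomorphic to $\mathbb{R}^n$, and $X_P$ is an $n$-manifold.

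The step to watch is resisting the temptation to prove that every over set $O_x$ has spherical order complex and then work in the barycentric subdivision $\Delta_P$. That stronger statement is false in general, exactly as the double suspension discussion preceding the proposition shows: an over set $O_x$ at a higher cell can have the order complex of a homology sphere and so fail even to be a cell poset, and its single suspension (the relevant vertex link of $\Delta_P$) need not be a sphere. What rescues the argument is that the only links entering the cover are the vertex links $\Delta_{O_v}$ at minimal elements, which are exactly the ones the hypothesis controls, together with the elementary fact that each cell is swallowed by the open star of one of its vertices. The one genuinely topological input I am relying on is therefore the conical local structure of a regular CW complex at a vertex, namely that its closed star is the cone on $\lk v = \Delta_{O_v}$, which is standard but should be invoked explicitly rather than derived from any manifold hypothesis.
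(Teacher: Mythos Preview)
Your overall strategy coincides with the paper's: recognise $\Delta_{O_v}$ as the link of the vertex $v$ in the barycentric subdivision $\Delta_P$, obtain a Euclidean chart at each vertex, and then cover $X_P$. The paper is much terser---it simply records that these vertex links are spheres of a common dimension $d-1$, that connectedness forces second countability, and then declares $X_P$ a $d$-manifold---so your write-up is a more detailed version of the same idea. One of those added details, however, is wrong.

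The ``standard fact'' you invoke is false for regular CW complexes: the closed star $\bigcup_{x\ge v}x$ need not be the cone on $\lk v$. Take the regular CW decomposition of $S^2$ with two $0$-cells $N,S$, two $1$-cells from $N$ to $S$, and two $2$-cells; every positive-dimensional cell contains $N$, so the closed star of $N$ is all of $S^2$, while $\Delta_{O_N}$ is a $4$-cycle and the cone on it is a disk, not a sphere. This example satisfies all the hypotheses of the proposition, so nothing you have assumed excludes it. What \emph{is} true is the simplicial statement inside $\Delta_P$: the closed star of the vertex $v$ there is honestly $v*\Delta_{O_v}$, and hence the open star of $v$ \emph{in $\Delta_P$} is the open cone on $S^{d-1}$, i.e.\ $\mathbb R^d$. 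The difficulty is precisely the one you tried to sidestep: the $\Delta_P$-open stars of only the minimal elements do \emph{not} cover $\Delta_P$, because a chain in $P$ need not begin at a minimal element, whereas the $X_P$-open stars $\bigcup_{x\ge v}x^\circ$ do cover but are not cones on $\Delta_{O_v}$ in any evident way. The two halves of your argument---the cone structure and the covering---live in different complexes and do not fit together as written. One way to close the gap is to prove by induction on $\dim w$ that the $\Delta_P$-open star of \emph{every} $w\in P$ embeds as an open set in $\mathbb R^d$: if $w$ covers $w'$ then $(w',w)=\emptyset$, so the link of $w$ inside $\Delta_{O_{w'}}$ is exactly $\Delta_{O_w}$, whence $C(\Delta_{O_w})$ is open in $\Delta_{O_{w'}}$ and $\mathbb R^{\dim w}\times C(\Delta_{O_w})\cong\mathbb R^{\dim w'}\times(0,1)\times C(\Delta_{O_w})$ is open in $\mathbb R^{\dim w'}\times C(\Delta_{O_{w'}})$, which is open in $\mathbb R^d$ by induction. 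Then the $\Delta_P$-open stars of \emph{all} $w\in P$ cover $\Delta_P$ and give the charts. You should also record second countability, as the paper does.
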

\begin{proof}
  The order complexes of the over sets of $P$ at minimal elements
  are isomorphic to the links of vertices of the order complex of $P$.
  Hence the links of vertices of the barycentric subdivision of $X_P$
  are homeomorphic to spheres.  That $X_P$ is connected ensures that
  these spheres all have the same dimension, say $d-1$, and that $X_P$
  is second countable.  Then $X_P$ is a $d$-manifold.
\end{proof}

A \defterm{PL-triangulated manifold} is a simplicial complex $X$ that
is homeomorphic to a manifold such that the link of every simplex of
$X$ is homeomorphic to a sphere \cite{Hudson:1969}.  PL-triangulated
manifolds are referred to as \emph{combinatorial manifolds} in the
PL-topology literature.

\begin{prop}
  \proplabel{combmfold} Let $P$ be a cell poset and suppose $X_P$ is
  connected.  Then the order complex of $P$ is a PL-triangulated
  manifold iff the order complexes of the strict intervals and over
  sets of $P$ are homeomorphic to spheres.
\end{prop}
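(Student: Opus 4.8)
The plan is to reduce everything to the standard description of the link of a simplex in an order complex. For a chain $\sigma = (x_0 < x_1 < \cdots < x_k)$, viewed as a simplex of $\Delta_P$, a chain $C$ lies in $\lk_{\Delta_P} \sigma$ exactly when $C \cup \sigma$ is a chain disjoint from $\sigma$; such a $C$ distributes among the ``slots'' cut out by $\sigma$, and elements lying in distinct slots are automatically comparable through the $x_i$. This gives the join decomposition
\[ \lk_{\Delta_P} \sigma \;=\; \Delta_{U_{x_0}} * \Delta_{(x_0,x_1)} * \cdots * \Delta_{(x_{k-1},x_k)} * \Delta_{O_{x_k}}, \]
in which an empty slot contributes the factor $\Delta_\emptyset = \Sph^{-1} = \emptyset$, which is absorbed since $\emptyset * Y = Y$. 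First I would record this identity together with the elementary fact that a join of spheres is a sphere, $\Sph^p * \Sph^q \cong \Sph^{p+q+1}$.

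For the implication from the sphere conditions to the manifold conclusion, suppose every strict interval and every over set of $P$ has order complex homeomorphic to a sphere. Since $P$ is a cell poset, \Thmref{cellpos} gives that every under set also has spherical order complex. Hence in the displayed formula every factor is a sphere, so $\lk_{\Delta_P} \sigma$ is a join of spheres and therefore a sphere, for \emph{every} simplex $\sigma$. It remains to see that $\Delta_P$ is homeomorphic to a manifold, and this is exactly \Propref{lkmfold}, whose hypothesis (spherical over sets at minimal elements) is a special case of ours; since $\Delta_P$ is the barycentric subdivision of $X_P$, it inherits the homeomorphism type of the manifold $X_P$. Thus $\Delta_P$ is a PL-triangulated manifold.

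For the converse, assume $\Delta_P$ is a PL-triangulated manifold, so that every link is a \emph{genuine} sphere. The naive move would be to read off $\Delta_{O_x}$ and $\Delta_{(x,y)}$ by ``cancelling'' the known spherical factors from a link, but this is invalid: the double suspension theorem furnishes a non-sphere $Z$, namely the suspension of the Poincar\'e homology sphere, with $\Sph^0 * Z \cong \Sph^5$, so join factors of a sphere need not be spheres. Instead I would choose $\sigma$ so that every slot but the desired one is empty. For an over set $O_x$, take $\sigma$ to be a covering chain from a minimal element up to $x$; this exists because the height grading on a cell poset is finite, and then every lower slot and the initial under-set slot vanish, leaving $\lk_{\Delta_P}\sigma = \Delta_{O_x}$, a sphere. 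For a strict interval $(x,y)$ with $x<y$, I would extend such a lower covering chain through $y$ and then continue up a covering chain from $y$ to a maximal element, so that $x$ and $y$ are consecutive in $\sigma$, all slots below $x$ are empty, and all slots above $y$ (including the terminal over-set slot) are empty; then $\lk_{\Delta_P}\sigma = \Delta_{(x,y)}$, again a sphere.

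The main obstacle is the upper covering chain to a maximal element required for the strict-interval case: a general cell poset need not possess maximal elements above a given $y$. Here I would invoke the manifold hypothesis twice. A manifold has a well-defined finite dimension $d$, so the grading takes values in $\{0,\dots,d\}$ and maximal elements exist; and a simplicial complex homeomorphic to a manifold is locally finite, whence $X_P$ is \emph{pure}, every cell lying in the closure of a top-dimensional $d$-cell. This supplies a maximal element above $y$ and the required covering chain. With both slot-vanishing constructions in hand, the converse follows and the equivalence is complete.
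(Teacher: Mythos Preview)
Your argument is correct and follows essentially the same route as the paper.  The paper packages the join formula for $\lk_{\Delta_P}\sigma$ into a separate lemma (\Lemref{plmfoldlem}) phrased in terms of the over sets of the cell poset $P'$ of $\Delta_P$, but since those over sets are precisely the cell posets of links in $\Delta_P$, this is only a repackaging of what you wrote.  For the converse direction the paper does exactly what you do: to isolate $\Delta_{O_z}$ it takes a maximal chain $c^z$ with maximum $z$, and to isolate $\Delta_{(x,y)}$ it takes $c^x \cup c_y$ with $c^x$ a maximal chain ending at $x$ and $c_y$ a maximal chain beginning at $y$.  Your discussion of why one cannot simply ``cancel'' spherical join factors (via the suspended Poincar\'e sphere) and your explicit justification that maximal elements exist above $y$ are both points the paper passes over in silence; your argument via finite dimension and purity of the manifold $\Delta_P$ is a clean way to handle the latter, and one could alternatively note that the link of $y$ in $\Delta_P$, being a sphere, is compact and hence finite, so $O_y$ is finite.
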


\Propref{combmfold} is an immediate consequence of the following lemma.

\begin{lem}
  \lemlabel{plmfoldlem} Let $P$ be a cell poset and suppose $X_P$ is
  connected.  Let $P'$ be the cell poset of the order complex
  $\Delta_P$.  The order complexes of the over sets of $P'$ are
  homeomorphic to spheres iff the order complexes of the strict
  intervals and over sets of $P$ are homeomorphic to spheres.
\end{lem}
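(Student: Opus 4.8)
The plan is to reduce both implications to the standard join decomposition of links in an order complex. Recall that, as noted just before \Propref{lkmfold}, since $\Delta_P$ is a simplicial complex the over set $O^{P'}_\sigma$ of $P'$ at a simplex $\sigma$ is the cell poset of the link $\lk \sigma$ in $\Delta_P$; hence its order complex is the barycentric subdivision of $\lk \sigma$ and is homeomorphic to $\lk \sigma$. Thus the statement ``the over sets of $P'$ have sphere order complexes'' is equivalent to ``$\lk_{\Delta_P}(C)$ is a sphere for every chain $C$ of $P$''. The key computation I would set up is the decomposition, for a chain $C = \{x_0 < x_1 < \cdots < x_k\}$,
\[ \lk_{\Delta_P}(C) \cong \Delta_{U_{x_0}} * \Delta_{(x_0,x_1)} * \cdots * \Delta_{(x_{k-1},x_k)} * \Delta_{O_{x_k}}, \]
where $*$ is the simplicial join and the empty complex $\emptyset = \Sph^{-1}$ is its unit. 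I would verify this by checking that a chain $D$ disjoint from $C$ spans a simplex of $\lk C$ iff $C \cup D$ is a chain, which happens iff each element of $D$ lies below $x_0$, strictly between consecutive $x_{i-1},x_i$, or above $x_k$; elements in distinct gaps are automatically comparable, so $D$ splits as a join of chains, one per gap. Finally, since $P$ is a cell poset the factor $\Delta_{U_{x_0}}$ is always a sphere.

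For the backward direction, assume every strict interval and over set of $P$ has sphere order complex. Then for any chain $C$ every factor on the right-hand side is a sphere (the under-set factor by the cell poset hypothesis, the interval and over-set factors by assumption), and since $\Sph^a * \Sph^b \cong \Sph^{a+b+1}$ and $\Sph^{-1}$ contributes trivially, a join of spheres is a sphere. Hence every $\lk_{\Delta_P}(C)$, and so every over set of $P'$, has sphere order complex.

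For the forward direction the idea is to realize each interval and each over set \emph{directly} as a single link, rather than as a suspension. Given $x \in P$, I would choose a saturated chain $C$ from a minimal element (a vertex) up to $x$; this exists because the grading by dimension makes all heights finite. Then every factor of the decomposition except $\Delta_{O_{x_k}} = \Delta_{O_x}$ is the order complex of an empty under set or of an empty covering interval, i.e.\ $\Sph^{-1}$, so the decomposition collapses to $\lk_{\Delta_P}(C) \cong \Delta_{O_x}$, which is a sphere by hypothesis. Similarly, given $x < y$, I would take a chain $C$ that is saturated from a vertex up to $x$, jumps from $x$ directly to $y$ with $x,y$ consecutive in $C$, and is saturated from $y$ up to a maximal element; now every factor vanishes except the one between $x$ and $y$, giving $\lk_{\Delta_P}(C) \cong \Delta_{(x,y)}$, again a sphere by hypothesis.

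The main obstacle to anticipate is the tempting but invalid shortcut of proving the forward direction by peeling the sphere factor $\Delta_{U_{x_0}}$ off a link, i.e.\ by inverting a suspension. As the discussion of the double suspension and the Poincar\'e homology sphere preceding this lemma shows, an iterated suspension of a nonsphere can be a sphere, so one cannot infer that a factor of a join-sphere is a sphere; a naive induction would stall exactly here, and no appeal to the Generalized Poincar\'e Conjecture would rescue it. The device that circumvents the difficulty is to use long (saturated) chains, making the unwanted factors \emph{empty} rather than merely spheres and exhibiting each desired order complex as an honest link with no suspension to invert. The one remaining point to handle with care is the existence of a maximal element above $y$ in the interval case, which holds whenever $\Delta_P$ is finite dimensional, the setting relevant to the manifold application in \Propref{combmfold}.
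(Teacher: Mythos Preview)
Your approach is correct and essentially identical to the paper's: both establish the join decomposition $\lk_{\Delta_P}(C)\cong \Delta_{U_{c_0}}*\Delta_{(c_0,c_1)}*\cdots*\Delta_{(c_{k-1},c_k)}*\Delta_{O_{c_k}}$ and then, for the forward direction, choose saturated chains so that all factors but the desired one collapse to $\Sph^{-1}$. Your closing worry about the existence of a maximal element above $y$ is in fact dispatched by the lemma's own hypotheses---if every over set of $P'$ has sphere order complex and $X_P$ is connected, then by \Propref{lkmfold} the order complex $\Delta_P$ is a manifold of a fixed dimension, so every chain is bounded and maximal elements exist above any $y$.
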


Indeed, the over sets of $P'$ are the cell posets of the links of the
simplices of $\Delta_P$.  If $\Delta_P$ is a PL-triangulated manifold
then these are all homeomorphic to spheres.  Conversely, if the links
are all homeomorphic to spheres then since $X_P$ is connected,
\Propref{lkmfold} ensures that $\Delta_P$ is homeomorphic to a
manifold.  Then, by definition, $X_P$ is a PL-triangulated manifold.

Before proving \Lemref{plmfoldlem} we study the over sets of cell
posets of order complexes.  Let $P$ be a poset and let $P'$ be the
cell poset of the order complex of $P$.  Every $c \in P'$ is a
nonempty chain
\[ c = \{ c_0, c_1, \ldots, c_k \} \subset P \] with
\[ c_0 < c_1 < \cdots < c_k \]
in $P$ and these chains are ordered by inclusion.  Each element of the
over set $O_c$ is a chain in $P$ containing $c$ and so is
determined by its intersections with $U_{c_0}$, with $O_{c_k}$ and
with the strict intervals $(c_{i-1},c_i)$.  It follows that $O_c$
embeds in the componentwise product order
\[ (U_{c_0})'_{\bot} \times (c_0,c_1)'_{\bot} \times (c_1,c_2)'_{\bot}
\times \cdots \times (c_{k-1},c_k)'_{\bot} \times (O_{c_k})'_{\bot} \]
where $Q'_{\bot}$ denotes the poset of \emph{all} chains (including
the empty chain) in a poset $Q$.  Aside from the presence of a minimum
element corresponding to the empty simplex, this product is isomorphic
to the cell poset of the simplicial join of the order complexes of
$U_{c_0}$, $O_{c_k}$ and the $(c_{i-1},c_i)$.  The complement of this
minimum element is the image of $O_c$ under its embedding in the
product.  Hence, $X_{O_c}$ is isomorphic to the simplicial join
\[ X_{O_c} \isomor O_{U_{c_0}} \join O_{(c_0,c_1)} \join
O_{(c_1,c_2)} \cdots \join O_{(c_{k-1},c_k)} \join
O_{O_{c_k}} \]
of the order complexes of $U_{c_0}$, $O_{c_k}$ and the
$(c_{i-1},c_i)$.

\begin{proof}[Proof of \Lemref{plmfoldlem}]
  The joins of spheres are spheres so, by the discussion above, the
  ``if'' part of \Lemref{plmfoldlem} has been established.  It remains
  to prove the ``only if'' part.

  Assume that the order complexes of the over sets of $P'$ are
  homeomorphic to spheres.  Let $x,y,z \in P$ with $x < y$.  Let
  $c^x \in P'$ and $c^z \in P'$ be maximal chains of $P$ that have $x$
  and $z$ as their maximums.  Let $c_y \in P'$ be a maximal chain of
  $P$ that has $y$ as its minimum.  Then we have
  \[ X_{O_{c_y \cup c^x}} \isomor O_{(x,y)} \] and
  \[ X_{O_{c^z}} \isomor \Delta_{O_z} \] and so $O_{(x,y)}$ and
  $O_{O_z}$ are homeomorphic to spheres.
\end{proof}

A \defterm{PL-triangulated sphere} is a PL-triangulated manifold that
is homeomorphic to a sphere.

\begin{thm}
  \thmlabel{links} Let $P$ be a cell poset.  The following conditions are
  equivalent.
  \begin{enumerate}
  \item \itmlabel{plcells} The order complexes of under sets of $P$
    are PL-triangulated spheres.
  \item \itmlabel{csphints} The order complexes of strict intervals of
    $P$ are PL-triangulated spheres.
  \item \itmlabel{sphints} The order complexes of strict intervals of
    $P$ are homeomorphic to spheres.
  \item \itmlabel{gsets} The over sets of $P$ are cell posets.
  \end{enumerate}
\end{thm}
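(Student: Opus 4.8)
The plan is to prove the equivalence of the four conditions in \Thmref{links} by establishing a cycle of implications together with a few direct steps, leaning heavily on the structural results already proved. The key computational input is the join decomposition
\[ X_{O_c} \isomor O_{U_{c_0}} \join O_{(c_0,c_1)} \join \cdots \join O_{(c_{k-1},c_k)} \join O_{O_{c_k}} \]
for over sets of the cell poset $P'$ of an order complex, together with \Lemref{plmfoldlem} and \Propref{combmfold}. Throughout I would freely use that a join of spheres is a sphere, that PL-triangulated spheres join to PL-triangulated spheres, and that under sets of a cell poset are themselves cell posets (so their order complexes are automatically PL-triangulated manifolds once we know they are spheres).

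First I would prove $\itmref{plcells} \Rightarrow \itmref{csphints}$. Assuming the under sets of $P$ have order complexes that are PL-triangulated spheres, \Propref{combmfold} applied to each under set $U_y$ (whose $X_{U_y}$ is the boundary sphere of the cell $y$, hence connected in the relevant range) forces the strict intervals and over sets \emph{within} each $U_y$ to be spheres; but a strict interval $(x,y)$ of $P$ with $x<y$ is exactly an over set inside $U_y$, so these are PL-triangulated spheres. More carefully, I would invoke \Lemref{plmfoldlem} directly: condition \itmref{plcells} says $\Delta_{U_y}$ is a PL-triangulated manifold for each $y$, which by the lemma is equivalent to the strict intervals and over sets of $U_y$ being homeomorphic to spheres, and being a PL-triangulated sphere is then automatic because these over sets of $U_y$ are again the under/interval structure of a cell poset. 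The implication $\itmref{csphints} \Rightarrow \itmref{sphints}$ is trivial, since a PL-triangulated sphere is in particular homeomorphic to a sphere.

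Next I would handle $\itmref{sphints} \Rightarrow \itmref{gsets}$, which is the heart of the theorem. By \Thmref{cellpos}, $O_x$ is a cell poset iff the order complexes of its under sets are spheres. Fix $x$ and a chain $c = \{x = c_0 < c_1 < \cdots\}$; an under set inside $O_x$ corresponds to an over set of the cell poset $P'$ of $\Delta_{O_x}$, and the join decomposition above expresses $X_{O_c}$ as a join of order complexes of strict intervals $(c_{i-1},c_i)$ together with boundary terms $O_{U_{c_0}}$ and $O_{O_{c_k}}$. Because $c_0 = x$ is minimal in $O_x$ its lower term is empty (the $(-1)$-sphere), and the top term $O_{O_{c_k}}$ is the corresponding over set one level up, so an inductive argument on the height above $x$ reduces every under set of $O_x$ to a join of strict-interval order complexes of $P$, each a sphere by \itmref{sphints}; the join is therefore a sphere. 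This gives that each $O_x$ is a cell poset. Finally, for $\itmref{gsets} \Rightarrow \itmref{plcells}$, I would use that if all over sets are cell posets then in particular all strict intervals (being over sets within under sets) are cell posets whose order complexes are spheres, and then \Propref{combmfold} applied to each $\Delta_{U_y}$ upgrades ``sphere'' to ``PL-triangulated sphere,'' closing the cycle.

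The main obstacle is the bookkeeping in $\itmref{sphints} \Rightarrow \itmref{gsets}$: one must carefully match under sets of $O_x$ (inside its own barycentric world) to the join decomposition of over sets of $P'$, keeping track of which factors degenerate to empty chains and ensuring the induction on chain length is well-founded. The subtlety is that \itmref{sphints} only asserts strict intervals of $P$ are spheres, so I must verify that every factor appearing in the relevant join is genuinely a strict interval of $P$ (or a degenerate boundary term that contributes a sphere of the expected dimension), rather than an over set, which need not a priori be a sphere. Once this matching is set up the join-of-spheres fact finishes the argument, but getting the indexing and the base cases of the induction exactly right — particularly the empty-chain and minimal-element conventions, and the convention that the empty space is $\Sph^{-1}$ — is where the real care is required.
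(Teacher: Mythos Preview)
Your approach to $(\itmref{plcells}) \Rightarrow (\itmref{csphints})$ and $(\itmref{csphints}) \Rightarrow (\itmref{sphints})$ is fine and matches the paper. The remaining two implications, however, are where you go astray.

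For $(\itmref{sphints}) \Rightarrow (\itmref{gsets})$ you have vastly overcomplicated a one-line step. To apply \Thmref{cellpos} to $O_x$ you need the order complexes of the under sets of $O_x$ to be spheres. But the under set of $O_x$ at an element $y \in O_x$ is, by definition, $\{z \in O_x : z < y\} = \{z \in P : x < z < y\} = (x,y)$, a strict interval of $P$. Hypothesis $(\itmref{sphints})$ says its order complex is a sphere, and you are done. There is no need for the join decomposition, no chains $c$, no induction, and no barycentric bookkeeping. Your sentence ``an under set inside $O_x$ corresponds to an over set of the cell poset $P'$ of $\Delta_{O_x}$'' conflates under sets of the poset $O_x$ with links in its order complex; these are different objects, and only the former is relevant here. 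The ``main obstacle'' you describe does not exist.

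For $(\itmref{gsets}) \Rightarrow (\itmref{plcells})$ you have the key identification backwards. You write that strict intervals, ``being over sets within under sets,'' are cell posets. But hypothesis $(\itmref{gsets})$ concerns over sets \emph{of $P$}, not over sets taken inside under sets of $P$, so this does not follow. The correct move is dual: a strict interval $(x,y)$ is the \emph{under set} $U^{O_x}_y$ of the over set $O_x$; since $O_x$ is a cell poset by $(\itmref{gsets})$, \Thmref{cellpos} gives that its under sets have order complexes homeomorphic to spheres. Similarly the over set of $U_z$ at $x$ is $(x,z) = U^{O_x}_z$. With both of these identified as under sets of a cell poset, \Propref{combmfold} upgrades $\Delta_{U_z}$ to a PL-triangulated sphere.
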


\Thmref{links} characterizes the regular CW complexes $X$ for which
each $d$-cell $x$ may be associated a link having the structure of a
regular CW complex in which the $(k-d-1)$-cells naturally correspond
to the $k$-cells of $X$ that are incident to $x$.  \Thmref{links} says
that this holds precisely when the boundaries of the cells of $X$ are
PL-triangulated spheres.

\begin{proof}[Proof of \Thmref{links}]
  $\text{(\itmref{plcells})} \Longrightarrow
  \text{(\itmref{csphints})}$ Let $(x,y)$ be a strict interval of $P$.
  Then $(x,y)$ is the over set of $U_y$ at $x$.  So, by the forward
  implication of \Propref{combmfold}, the order complex of $(x,y)$ is
  homeomorphic to a sphere.  Every over set and strict interval of
  $(x,y)$ is a strict interval of $P$ and so, by the same argument,
  must have order complex homeomorphic to a sphere.  Then, by the
  reverse implication of \Propref{combmfold}, the order complex of
  $(x,y)$ is a PL-triangulated sphere.
  
  $\text{(\itmref{csphints})} \Longrightarrow \text{(\itmref{sphints})}$
  This is clear.
  
  $\text{(\itmref{sphints})} \Longrightarrow \text{(\itmref{gsets})}$ Let
  $O_x$ be a over set of $P$.  By \Thmref{cellpos} we need only
  show that the under set of $O_x$ at any $y \in O_x$ has order
  complex homeomorphic to a sphere.  But the under set of $O_x$ at
  $y$ is the strict interval $(x,y)$ of $P$ and so this holds.
  
  $\text{(\itmref{gsets})} \Longrightarrow \text{(\itmref{plcells})}$
  Let $U_z$ be a under set of $P$.  By \Thmref{cellpos}, the order
  complex of $U_z$ is homeomorphic to a sphere.  To prove that it is a
  PL-triangulated sphere it suffices, by \Propref{combmfold}, to show
  that, for $x < y < z$, the strict interval $(x,y)$ and the over
  set $O^{U_y}_x$ of $U_y$ at $x$ have order complexes homeomorphic to
  spheres.  But $(x,y)$ and $O^{U_y}_x$ are equal to the under sets
  $U^{O_x}_z$ and $U^{O_x}_y$ of $O_x$ and so, by \Thmref{cellpos},
  they have order complexes homeomorphic to spheres.
\end{proof}

\section{Forman Morse Theory and the Recognition of Spheres}
\seclabel{forman}

Let $P$ be a finite graded poset.  The \defterm{Hasse diagram}
$\Gamma_P$ of $P$ is the covering relation $C_P$ viewed as a directed
graph.  A \defterm{matching} $M$ on $P$ is a set of pairwise disjoint
closed edges of $\Gamma_P$.  An element $x \in P$ is \defterm{matched}
by $M$ if it is contained in an edge of $M$.  A matching $M$ on $P$ is
\defterm{acyclic} if the directed graph $\Gamma_P^M$ obtained from
$\Gamma_P$ by reversing the direction on the edges of $M$ has no
directed cycles.  An element $x \in P$ is a \defterm{critical element}
of $M$ if $x$ is not matched by $M$.  Acyclic matchings are also known
as \defterm{Morse matchings}.  If $P$ is a cell poset then an acyclic
matching $M$ on $P$ determines a Forman discrete Morse function
\cite{Forman:1998} on $X_P$ with the same set of critical cells
\cite{Chari:2000}.  The language of acyclic matchings for discrete
Morse theory is due to Chari \cite{Chari:2000}.

\begin{prop}
  \proplabel{alternate} Let $P$ be a finite graded poset, let $M$ be a
  matching on $P$ and let
  $(\gamma_0,\gamma_1, \gamma_2,\ldots,\gamma_k)$ be the sequence of
  edges of a directed cycle $\gamma$ of $\Gamma_P^M$.  Then no
  consecutive pair of edges $(\gamma_i, \gamma_{i+1})$---indices
  modulo $k$---has both edges in $M$ or both edges in the complement
  of $M$.
\end{prop}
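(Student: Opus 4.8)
The plan is to label each edge of the cycle $\gamma$ according to whether it lies in $M$ or in its complement, and to show that these two labels must alternate as one traverses $\gamma$; this is exactly the assertion that no consecutive pair is monochromatic. First I would record an orientation dichotomy coming from the grading. In the Hasse diagram $\Gamma_P$ every edge joins an element to one of its covers, and all of these edges point the same way relative to the grading, so in $\Gamma_P^M$ the edges of $M$, having been reversed, point in the opposite direction to the edges outside $M$. Consequently, as one traverses $\gamma$, each edge in $M$ changes the grade of the current vertex by $-1$ and each edge outside $M$ changes it by $+1$ (or vice versa, according to the convention for orienting $\Gamma_P$; the argument is symmetric).

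Next I would rule out two consecutive edges both lying in $M$. Consecutive edges $\gamma_i, \gamma_{i+1}$ share the intermediate vertex of the cycle. Since the covering relation admits at most one covering edge per pair of elements, $\Gamma_P^M$ is a simple digraph and $\gamma_i \ne \gamma_{i+1}$; hence their common vertex would lie in two distinct edges of $M$, contradicting that the edges of a matching are pairwise disjoint. This disposes of the ``both in $M$'' case directly and completely.

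The ``both in the complement of $M$'' case is the main obstacle, precisely because it admits no purely local obstruction: a single vertex may perfectly well both receive and emit non-matched edges pointing the same way, so the matching condition alone says nothing here. I would instead argue globally. Because $\gamma$ is a closed directed cycle, its total grade change is zero, so by the dichotomy above the number of its edges lying in $M$ equals the number lying outside $M$; call this common value $m$, so that $\gamma$ has $2m$ edges. I would then finish by a short counting of the cyclic adjacencies of $\gamma$. By the previous step, each of the $m$ edges in $M$ is immediately followed by an edge outside $M$, producing $m$ adjacencies in which an $M$-edge is followed by a complement-edge. These $m$ adjacencies already exhibit all $m$ complement-edges as the second member of some adjacency, so every complement-edge is immediately preceded by an edge of $M$; in particular none is preceded by another complement-edge. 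This rules out two consecutive edges outside $M$, and together with the previous step it establishes the alternation asserted by the proposition.
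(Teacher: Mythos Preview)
Your proof is correct and follows essentially the same approach as the paper's: both use the grading to see that $M$-edges and non-$M$-edges occur in equal number along the cycle, both invoke the matching condition to forbid two consecutive $M$-edges, and both finish with a short counting argument to forbid two consecutive non-$M$-edges. The only cosmetic difference is in that last step: the paper argues by contradiction (a consecutive non-$M$ pair would force the remaining arc to carry two more $M$-edges than non-$M$-edges, hence a consecutive $M$ pair), whereas you argue directly via the bijection sending each $M$-edge to its successor.
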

\begin{proof}
  Following an edge of $M$ causes a unit decrease in the grading.
  Following an edge not in $M$ causes a unit increase in the grading.
  Hence $\gamma$ must contain the same number of edges in $M$ as it
  does edges not in $M$.  Since $M$ is a matching, there is no
  consecutive pair of edges of $\gamma$ both contained in $M$.
  Suppose we have a consecutive pair of edges
  $(\gamma_i,\gamma_{i+1})$ neither of which are contained in $M$.
  Then there are two more $M$-edges in
  $(\gamma_{i+2}, \gamma_{i+3}, \ldots, \gamma_{i+k})$ then there are
  non-$M$-edges.  Hence there is a consecutive pair of edges of
  $\gamma$ both contained in $M$, contradicting the hypothesis that
  $M$ is a matching.
\end{proof}

We require the following basic theorems of Forman discrete Morse
theory.

\begin{thm}[Forman \cite{Forman:1998}]
  \thmlabel{collapse} Let $P$ be a cell poset.  Let $M$ be an acyclic
  matching on $P$ and let $Q$ be the set of critical cells of $M$.  If
  $Q$ is downward closed, then $X_Q$ can be obtained from $X_P$ by a
  sequence of elementary collapses.  In particular, $X_Q$ is homotopy
  equivalent to $X_P$.
\end{thm}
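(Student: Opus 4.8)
The plan is to realize each edge of the matching $M$ as an elementary collapse and to use acyclicity to schedule these collapses correctly. Recall that a cell $x$ of a regular CW complex is a \emph{free face} when it is properly contained in a unique cell $y$; in that situation $y$ is necessarily maximal with $|y| = |x|+1$, and deleting the open cells $x^{\circ}$ and $y^{\circ}$ is an elementary collapse onto a subcomplex that deformation retracts onto it. Since each edge of $M$ joins $x$ to a cell $y$ with $C_P(x,y)$, every matched pair is a candidate for such a collapse, and the critical cells $Q$ are precisely the cells meant to survive. I would argue by induction on the number of edges of $M$, the base case $M = \emptyset$ being the identity $X_P = X_Q$.

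The crux is the following lemma: if $M$ is nonempty then there is a matched pair $(x,y)$ with $C_P(x,y)$ such that $y$ is the only cell above $x$, so that $x$ is a free face and collapsing $(x,y)$ is an elementary collapse of $X_P$. Here the hypothesis that $Q$ is downward closed does essential work: no cell lying above a matched cell can be critical, for if $x \notin Q$ and $x < z$ with $z \in Q$ then downward closure forces $x \in Q$. Hence every coface of a matched cell is itself matched, so the only obstructions to a matched cell being free are other matched cofaces, all of which are themselves eventually collapsed.

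To prove the lemma I would pick a matched pair $(x,y)$ whose upper cell $y$ has maximal grade $|y|$ among all upper cells of $M$, and show it is free or else manufacture a directed cycle in $\Gamma_P^M$. First, $y$ is maximal in $P$: any coface $z > y$ is matched by the previous paragraph, but then either $z$ is an upper cell of grade $>|y|$, or $z$ is a lower cell whose partner has grade $>|y|$, each contradicting the choice of $y$. If $x$ had a second coface, choose a cover $y' \neq y$ of $x$; it is matched and, having grade $|y|$, is likewise maximal, hence the upper cell of a pair with $C_P(x',y')$ for some $x' \neq x$. In $\Gamma_P^M$ this produces the directed segment $x' \to y' \to x \to y$, one step of a gradient path that descends from a maximal cell to a new facet and climbs a matched edge. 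Iterating, either we reach a free pair, or we keep extending this directed walk; since $\Gamma_P^M$ is a finite acyclic graph the walk must eventually repeat a vertex, yielding a directed cycle and contradicting acyclicity. Thus a free pair exists.

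Granting the lemma I would finish the induction. Collapse the free pair $(x,y)$ and set $P' = P \setminus \{x,y\}$. Since $y$ is maximal and $y$ is the only cell above $x$, the set $P'$ is downward closed, hence the cell poset of a subcomplex $X_{P'}$ of $X_P$; moreover $M$ restricted to $P'$ is again acyclic, as its modified Hasse diagram is the induced subgraph of the directed acyclic graph $\Gamma_P^M$ obtained by deleting $x$ and $y$, and its critical set is still the downward closed set $Q$. By induction $X_{P'}$ collapses to $X_Q$, whence $X_P \searrow X_{P'} \searrow X_Q$, and the elementary collapses exhibit $X_Q$ as a deformation retract of $X_P$. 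I expect the main obstacle to be the lemma, and specifically organizing the non-free case into a genuine nontrivial closed gradient path so that acyclicity of $M$ (whose cycles are described by \Propref{alternate}) is contradicted; the verification that the collapsed poset inherits all the hypotheses is routine.
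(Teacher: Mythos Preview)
The paper does not prove this theorem: it is quoted from Forman \cite{Forman:1998} and used as a black box, so there is no argument in the paper to compare yours against. Your proof is the standard direct one and is essentially correct; it would stand in for the missing proof.

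Two small points of cleanup. First, your displayed path $x' \to y' \to x \to y$ has the arrows reversed relative to the paper's conventions. In the paper, edges of $\Gamma_P$ point from the smaller to the larger element, and in $\Gamma_P^M$ the matched edges are reversed, so matched edges point downward and unmatched edges point upward. With $(x,y)$ and $(x',y')$ matched and $x < y'$ unmatched, the directed segment in $\Gamma_P^M$ is $y \to x \to y' \to x'$; your description ``descends \ldots\ and climbs a matched edge'' is likewise inverted. This does not affect the substance of the argument---the iteration still produces an alternating directed walk in $\Gamma_P^M$ that must terminate at a free pair---but you should align the directions with \Propref{alternate} and the proof of \Thmref{bsbd}.

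Second, the sentence ``since $\Gamma_P^M$ is a finite acyclic graph the walk must eventually repeat a vertex, yielding a directed cycle'' is phrased backwards: in a finite directed acyclic graph a walk can \emph{never} repeat a vertex, and that is exactly why it must terminate. The termination condition is that the current lower cell $x^{(k)}$ has no coface other than its match $y^{(k)}$, which is precisely freeness. Your inductive step---checking that $P' = P \setminus \{x,y\}$ is downward closed, that $M' = M \setminus \{(x,y)\}$ remains acyclic as an induced subgraph of a DAG, and that its critical set is still $Q$---is fine.
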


\begin{thm}[{Forman \cite[Corollary~3.5]{Forman:1998}}]
  Let $P$ be a cell poset and let $M$ be an acyclic matching on $P$.
  Then $X_P$ is homotopy equivalent to a CW complex with as many cells
  of each dimension as $M$ has critical cells of that dimension.
\end{thm}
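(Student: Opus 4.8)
The plan is to build $X_P$ up one matched pair or critical cell at a time and to track the homotopy type at each stage, exactly as in smooth Morse theory. By the correspondence of Chari recalled above, the acyclic matching $M$ determines a discrete Morse function $f$ on $X_P$ whose critical cells are precisely the critical elements of $M$; I would work with $f$, though one could argue directly with $M$ and a linear extension of $\Gamma_P^M$. After a harmless perturbation I may assume $f$ takes distinct values $v_1 < v_2 < \cdots < v_N$, and I would study the level subcomplexes $X(c) = \bigcup_{f(\sigma) \le c} \overline{\sigma}$, where $\overline{\sigma}$ is the closure of $\sigma$ (its union with all its faces). Each $X(c)$ is downward closed, hence a genuine subcomplex, and as $c$ increases these sweep out a filtration $\emptyset = X(c_0) \subseteq X(c_1) \subseteq \cdots \subseteq X(c_N) = X_P$.

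The argument then rests on two local statements describing how $X(c)$ changes as $c$ passes a single value $v_i$. First, suppose $v$ is the value of a matched pair $(a,b)$ with $a < b$ and $C_P(a,b)$, normalized so that $f(b) = v \le f(a)$. Since $b$ is matched downward to $a$, every other face of $b$ has value $< v$ and so is already present; thus adding $b$ introduces only $a$ among new cells. Since $a$ is matched upward to $b$, every other coface of $a$ has value $> v$ and so is still absent, which makes $a$ a free face of $b$ in $X(c)$. Hence crossing $v$ is an elementary expansion and leaves the homotopy type unchanged. Second, if $v$ is the value of a critical cell $\sigma$ of dimension $p$, then $\sigma$ is the unique new cell; its faces all have strictly smaller value and are already present, so $X(c)$ is obtained from the previous level subcomplex by attaching a single $p$-cell along the attaching map of $\sigma$.

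Assembling these over $v_1 < \cdots < v_N$ by induction, every matched-pair crossing preserves the homotopy type while every critical crossing attaches one cell of the corresponding dimension. Collapsing away the expansions therefore produces a homotopy equivalence from $X_P$ onto a CW complex whose cells are in dimension-preserving bijection with the critical cells of $M$, the attaching maps being those read off at the critical crossings. This complex has exactly as many cells in each dimension as $M$ has critical cells of that dimension, which is the claim.

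The hard part will be the first local statement: one must ensure both that the two members of a matched pair enter the filtration together and that, at that moment, the lower cell is a free face of the upper one. Both facts flow from the defining property of a discrete Morse function---that each noncritical cell has exactly one exceptional face or coface---which is in turn exactly what the acyclicity of $M$ secures through Chari's correspondence. Once this is in hand, the remainder is bookkeeping over the linearly ordered values $v_1 < \cdots < v_N$.
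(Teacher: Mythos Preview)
The paper does not give its own proof of this theorem: it is stated with attribution to Forman \cite[Corollary~3.5]{Forman:1998} and used as a black box, so there is nothing in the paper to compare your argument against. What you have sketched is essentially Forman's original proof from the cited reference---filter $X_P$ by sublevel complexes of an associated discrete Morse function, show that crossing a noncritical value is an elementary expansion while crossing a critical value attaches a single cell of the appropriate dimension, and compose.

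Your outline is correct. One point that deserves slightly more care than you indicate is the verification that, when you cross the value $v = f(b)$ of a matched pair $(a,b)$, the cell $a$ is genuinely a \emph{free} face of $b$ in $X(v)$. You argue that every coface $c \neq b$ of $a$ satisfies $f(c) > f(a) > v$, and that is immediate from the Morse axioms for the \emph{covering} cofaces; but $X(v)$ is defined using closures, so one must also rule out that some cell $\sigma$ with $f(\sigma) \le v$ drags a coface of $a$ into $X(v)$ as one of its own faces. This follows from Forman's lemma that $f(\tau) < f(\sigma)$ whenever $\tau < \sigma$ are not a matched pair (for arbitrary codimension, not just codimension one), which is precisely where acyclicity of $M$ does real work. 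With that lemma in hand, your filtration argument goes through exactly as you describe.
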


Let $P$ be a finite graded poset.  A \defterm{spherical matching} on
$P$ is an acyclic matching $M$ on $P$ with two critical cells.

\begin{thm}[{Sphere Theorem of Forman \cite[Theorem~5.1(1)]{Forman:1998}}]
  \thmlabel{sphere} Let $P$ be a cell poset.  If $P$ has a spherical
  matching then $X_P$ is homotopy equivalent to a sphere.
\end{thm}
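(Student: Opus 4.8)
The plan is to reduce to the collapsing machinery already assembled above and then recognize a two-cell complex directly as a sphere. By definition of spherical matching, the hypothesis gives an acyclic matching $M$ on $P$ with exactly two critical cells. I would first invoke the preceding theorem of Forman (Corollary~3.5 \cite{Forman:1998}), which asserts that an acyclic matching on a cell poset $P$ yields a CW complex $Y$ homotopy equivalent to $X_P$ with as many cells in each dimension as $M$ has critical cells of that dimension. Since $M$ has two critical cells, $Y$ is a CW complex with exactly two cells, say of dimensions $p \le q$.

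It then remains to show that every CW complex $Y$ with exactly two cells is homotopy equivalent to (in fact homeomorphic to) a sphere. The crucial observation is that the lower-dimensional cell is forced to be a vertex. Indeed, the $p$-dimensional cell is attached along its boundary sphere $\Sph^{p-1}$ to the skeleton $Y^{p-1}$; but $Y$ has no cells of dimension below $p$, so $Y^{p-1}$ is empty. As there is no map $\Sph^{p-1} \to \emptyset$ when $p \ge 1$, we conclude $p = 0$.

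Having established $p = 0$, I would finish by inspecting the attaching map of the remaining cell. If $q = 0$, then $Y$ is a pair of points and is $\Sph^0$. If $q \ge 1$, then the $q$-cell is attached along $\Sph^{q-1}$ to the $0$-skeleton $Y^0$, which is the single point arising from the unique $0$-cell; hence the attaching map is constant and $Y = D^q / \partial D^q \cong \Sph^q$. In either case $Y$, and therefore $X_P$, is homotopy equivalent to a sphere, as claimed.

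The substantive input here is Forman's Corollary~3.5, which packages all of the acyclic-matching analysis; the only genuinely new content is the elementary CW-topological recognition of a two-cell complex. The one point deserving care, and the main obstacle to a careless argument, is precisely the claim that the smaller of the two critical cells must be $0$-dimensional: this is forced by the attaching condition rather than something one is entitled to assume at the outset.
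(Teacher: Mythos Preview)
The paper does not supply its own proof of this statement: it is quoted as Forman's Sphere Theorem \cite[Theorem~5.1(1)]{Forman:1998} and used as a black box. So there is nothing in the paper to compare your argument against.

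That said, your derivation is correct and is exactly the standard way one extracts the Sphere Theorem from Forman's Corollary~3.5 (also quoted without proof in the paper). The only step requiring any thought is the one you flag: that the lower-dimensional critical cell must be a $0$-cell, which you justify correctly via the nonexistence of an attaching map $\Sph^{p-1}\to\emptyset$ for $p\ge 1$. After that the identification of a two-cell CW complex with $\Sph^q$ is immediate. Nothing is missing.
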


We also require the Generalized Poincar{\'e} Conjecture for
topological manifolds.

\begin{thm}[Topological Generalized Poincar{\'e} Conjecture]
  \thmlabel{pconj} A closed topological manifold $X$ is homotopy
  equivalent to the $d$-sphere iff it is homeomorphic to the
  $d$-sphere.
\end{thm}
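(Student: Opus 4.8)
The ``only if'' direction is immediate, since any homeomorphism is in particular a homotopy equivalence; the entire content therefore lies in the ``if'' direction, namely that a closed topological $d$-manifold $X$ homotopy equivalent to $\Sph^d$ is homeomorphic to $\Sph^d$. There is no uniform argument, and the plan is to proceed by cases on the dimension $d$, invoking a different deep input in each range. In every case one first extracts from the homotopy equivalence the facts that $X$ is simply connected (for $d \ge 2$) and has the homology of $\Sph^d$.

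For $d \le 2$ the statement reduces to the classification of closed manifolds of small dimension: the only closed manifold with the homology of $\Sph^d$ and (for $d=2$) trivial fundamental group is $\Sph^d$ itself. For $d = 3$ the statement is the classical Poincar\'e conjecture, so here $X$ is a simply connected closed $3$-manifold and one invokes Perelman's resolution via Hamilton's Ricci flow with surgery, which deforms an arbitrary metric on $X$ to a round one and identifies $X$ with $\Sph^3$. For $d = 4$ one appeals to Freedman's classification of simply connected closed topological $4$-manifolds: the oriented homotopy type is determined by the intersection form, a manifold homotopy equivalent to $\Sph^4$ has trivial form, and Freedman's theorem then yields a homeomorphism onto $\Sph^4$.

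For $d \ge 5$ the plan is the handle-theoretic argument of Smale carried out in the topological category (equivalently, Newman's engulfing proof). First I would excise two disjoint open $d$-balls $D_1, D_2$ from $X$, leaving a cobordism $W$ between the two boundary spheres $\Sph^{d-1}$. Since $X$ is a homotopy $\Sph^d$, removing a point leaves a contractible space and removing a second point leaves the homotopy type of $\Sph^{d-1}$, so each inclusion $\partial_i W \hookrightarrow W$ is a homotopy equivalence and $W$ is an h-cobordism. The topological h-cobordism theorem then supplies a homeomorphism $W \cong \Sph^{d-1} \times [0,1]$, whence $X$ is obtained by capping the two ends of a cylinder with $d$-balls. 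Finally the Alexander trick, coning off the boundary identification, shows that any such ``twisted sphere'' is homeomorphic to $\Sph^d$.

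The main obstacle is not any single step of this outline but the sheer depth of the inputs in the low dimensions: the case $d = 3$ rests on the full machinery of Ricci flow with surgery and the case $d = 4$ on Freedman's topological surgery, each a landmark theorem in its own right, whereas the high-dimensional range is comparatively elementary once the topological h-cobordism theorem is granted. For the purposes of this paper the theorem is used only as a black box, so it is cited rather than reproved.
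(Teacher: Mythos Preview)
Your proposal is correct and matches the paper's treatment: both handle the theorem as a black box, splitting on dimension and citing the classification of low-dimensional manifolds for $d\le 2$, Perelman for $d=3$, Freedman for $d=4$, and the high-dimensional work of Smale et al.\ for $d\ge 5$. The only minor difference is that for the topological category in dimensions $d\ge 5$ the paper attributes the result specifically to Newman's extension of the Stallings--Zeeman engulfing method rather than to a topological h-cobordism argument.
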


The first breakthrough in the proof of the Generalized Poincar{\'e}
Conjecture was made by Smale, who proved that a PL-triangulated
manifold $X$ that is homotopy equivalent to the $d$-sphere is
homeomorphic to the $d$-sphere, for $d \ge 5$ \cite{Smale:1961}.
Stallings gave a different proof of this fact for $d \ge 7$ using an
``engulfing'' method \cite{Stallings:1960}.  This method was later
extended by Zeeman to prove the cases $d = 5$ and $d = 6$
\cite{Zeeman:1962}.  Newman generalized the engulfing method to
topological manifolds and thus completed the proof of the Topological
Generalized Poincar{\'e} Conjecture for $d \ge 5$
\cite[Theorem~7]{Newman:1966}.  In dimension 4 the conjecture was
proved by Freedman \cite{Freedman:1982}.  In dimension 3 it was proved
by Perelman \cite{Perelman:2002a, Perelman:2002b, Perelman:2002c}.  In
dimensions at most 2, it follows from the classification of manifolds.

\begin{thm}
  \thmlabel{plsphere} Let $P$ be a poset such that the order complexes
  of the under sets of $P$ are PL-triangulated spheres.  The order
  complex of $P$ is a PL-triangulated sphere iff the order complexes
  of $P$ and all of its over sets are homotopy equivalent to
  spheres.
\end{thm}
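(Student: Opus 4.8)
The plan is to prove the two implications separately, noting first that the standing hypothesis makes the under sets of $P$ homeomorphic to spheres, so $P$ is a cell poset by \Thmref{cellpos} and the machinery of \Secref{cellposet} is available. Feeding the hypothesis (which is condition \itmref{plcells} of \Thmref{links}) through \Thmref{links} immediately yields the two facts I will use repeatedly: the strict intervals of $P$ have order complexes that are PL-triangulated spheres (condition \itmref{csphints}), and every over set $O_x$ is a cell poset (condition \itmref{gsets}). For the forward implication, if $\Delta_P$ is a PL-triangulated sphere then it is homotopy equivalent to a sphere, and being a PL-triangulated manifold the ``only if'' direction of \Propref{combmfold} shows its over sets are homeomorphic, hence homotopy equivalent, to spheres; this is the easy direction.

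The reverse implication carries the content, and I would prove it by induction on $\dim \Delta_P$. The crucial observation is that over sets inherit the full hypothesis: for $y \in O_x$ the under set of $O_x$ at $y$ is the strict interval $(x,y)$, which is a PL-triangulated sphere, while the over set of $O_x$ at $y$ is simply $O_y$, because $z > y > x$ forces $z \in O_x$. Hence $O_x$ is a cell poset whose under sets are PL-triangulated spheres and for which $\Delta_{O_x}$ together with all its over sets are homotopy equivalent to spheres---precisely the hypotheses of this implication. Since any chain of $O_x$ extends by $x$ to a strictly longer chain of $P$, we have $\dim \Delta_{O_x} < \dim \Delta_P$, so induction gives that each $\Delta_{O_x}$ is a PL-triangulated sphere, in particular homeomorphic to a sphere.

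Now the order complexes of both the strict intervals and the over sets of $P$ are homeomorphic to spheres, so the ``if'' direction of \Propref{combmfold} shows that $\Delta_P$ is a PL-triangulated manifold. It is therefore a topological manifold homotopy equivalent to some $\Sph^d$; since a connected non-compact $d$-manifold without boundary has vanishing top homology while $\Sph^d$ does not, $\Delta_P$ is in fact closed, and \Thmref{pconj} upgrades the homotopy equivalence to a homeomorphism. Thus $\Delta_P$ is a PL-triangulated sphere, completing the induction.

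The step I expect to be the main obstacle is exactly this final upgrade from homotopy type to homeomorphism type. \Thmref{pconj} can only be applied once $\Delta_P$ is known to be a closed manifold, and establishing the manifold property via \Propref{combmfold} demands that the over sets be genuine spheres rather than mere homotopy spheres; the induction is arranged precisely to supply this, so recognizing through \Thmref{links} that the over sets are cell posets satisfying the same hypotheses is the decisive move. Minor technical points I would handle separately are the connectedness requirement of \Propref{combmfold} (automatic once $\dim \Delta_P \ge 1$) and the degenerate spheres of dimension $-1$ and $0$, which form the base of the induction.
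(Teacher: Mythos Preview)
Your proposal is correct and follows essentially the same route as the paper: induction on the length of chains (equivalently, on $\dim\Delta_P$), using \Thmref{links} to see that each $O_x$ inherits the hypotheses, applying the inductive hypothesis to make each $\Delta_{O_x}$ a genuine sphere, then invoking \Propref{combmfold} to get a PL-triangulated manifold and \Thmref{pconj} to finish. You are in fact slightly more careful than the paper in two places---you argue explicitly that $\Delta_P$ is closed before applying Poincar\'e, and you flag the connectedness hypothesis of \Propref{combmfold}---whereas the paper leaves these implicit.
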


\begin{rmk}
  \rmklabel{lpsphere} Let $P$ be as in \Thmref{plsphere}.  By
  \Thmref{cellpos} and \Thmref{links}, $P$ and its over sets are
  all cell posets.  So if $Q$ is equal to $P$ or to one of its over
  sets then to show that the order complex of $Q$ is homotopy
  equivalent to a sphere it suffices to show that $Q$ has a spherical
  matching.  This holds by \Thmref{sphere} and the fact that the order
  complex of $Q$ is the barycentric subdivision of $X_Q$.
\end{rmk}

\begin{proof}[Proof of \Thmref{plsphere}]
  We prove the ``if'' part since the ``only if'' part is immediate.
  The proof is by induction on the maximum size $k$ of a chain in $P$.
  If $k = 0$ then $P$ is the empty poset and so has the
  PL-triangulated $-1$-sphere (i.e. the empty simplicial complex) as
  its order complex.

  Suppose $k > 0$ and that the theorem holds for all lesser values of
  $k$.  Take $x \in P$.  We show that $O_x$ satisfies the conditions
  of the theorem.  The under sets of $O_x$ are strict intervals of
  $P$ and so, by \Thmref{links}, the order complexes of the under
  sets of $O_x$ are PL-triangulated spheres.  By assumption $O_x$ is
  homotopy equivalent to a sphere as are its over sets since they
  are also over sets of $P$.  Hence, by the inductive hypothesis,
  the order complex of $O_x$ is a PL-triangulated sphere.  By
  \Thmref{links}, the strict intervals of $P$ are also PL-triangulated
  spheres and so, by \Propref{combmfold}, the order complex $O_P$ of
  $P$ is a PL-triangulated manifold.  By assumption, $O_P$ is homotopy
  equivalent to a sphere so, by \Thmref{pconj}, $O_P$ is homeomorphic
  to a sphere and so is a PL-triangulated sphere.
\end{proof}

\section{Bisimplices}
\seclabel{bisimplex}

A \defterm{bipartitioned set} is a set $S$ along with a bipartition
$S = A \sqcup B$ into subsets that are possibly empty.  A subset
$T \subseteq S$ is considered to be a bipartitioned set with its
induced bipartition $T = (T \cap A) \sqcup (T \cap B)$.  Our goal is
to span cells on certain subsets of a bipartitioned set, just as
simplices are spanned on subsets of vertices of a simplicial complex.
In our case, however, not all subsets are eligible to span a cell so
we introduce the term spanworthy.  A bipartitioned set
$S = A \sqcup B$ is \defterm{spanworthy} if $S \neq \emptyset$ and the
following holds. \[ |A| \le 1 \Longleftrightarrow |B| \le 1 \]
Spanworthiness excludes precisely the following cases.
\begin{itemize}
\item $\emptyset \sqcup \emptyset$
\item $A \sqcup \emptyset$ with $|A| \ge 2$
\item $\emptyset \sqcup B$ with $|B| \ge 2$
\item $A \sqcup \{b\}$ with $|A| \ge 2$
\item $\{a\} \sqcup B$ with $|B| \ge 2$
\end{itemize}
In particular, it excludes any $S$ of cardinality $3$.

\begin{thm}
  \thmlabel{bsbd} Let $S = A \sqcup B$ be a spanworthy bipartitioned
  set and let $P$ be the collection of spanworthy proper subsets of
  $S$ ordered by inclusion.  Then $P$ is a cell poset and $O_P$ is a
  PL-triangulated sphere.
\end{thm}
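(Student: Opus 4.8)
The plan is to induct on the cardinality $N = |S|$, using \Thmref{plsphere} as the inductive engine. The two smallest spanworthy sets are handled by hand: for $N = 1$ the poset $P$ is empty and its order complex is the $(-1)$-sphere, while for $N = 2$ (necessarily $|A| = |B| = 1$) the poset $P$ consists of the two singleton subsets and its order complex is $\Sph^0$. There is no spanworthy set of cardinality $3$, so the next case is $N \ge 4$, where spanworthiness forces $|A|, |B| \ge 2$. For the inductive step I must verify the two hypotheses of \Thmref{plsphere} for $P$: that the order complexes of the under sets of $P$ are PL-triangulated spheres, and that $P$ together with all of its over sets are homotopy equivalent to spheres. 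Granting these, \Thmref{plsphere} gives that the order complex of $P$ is a PL-triangulated sphere.

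The under set condition is immediate from the induction. For $T \in P$ the under set $U_T$ is exactly the collection of spanworthy proper subsets of $T$, that is, it is the poset ``$P$ for the spanworthy set $T$''; since $|T| < N$ and $T$ is spanworthy, the inductive hypothesis gives that its order complex is a PL-triangulated sphere, the base cases $|T| \le 2$ above grounding this. In particular the under sets are homeomorphic to spheres, so by \Thmref{cellpos} the poset $P$ is a cell poset, which is the first assertion of the theorem. By \Thmref{links}, the over sets of $P$ are then cell posets, so it remains only to show that the order complex of each is homotopy equivalent to a sphere; by \Rmkref{lpsphere} it suffices to produce a spherical matching on each nonempty over set and on $P$ itself, the empty over sets (links of the maximal cells) being $(-1)$-spheres by convention.

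The over sets split into two kinds. If $T$ has at least two elements on each side, then every set strictly between $T$ and $S$ is automatically spanworthy, so $O_T$ is the poset of proper nonempty subsets of $S \setminus T$, i.e.\ the face poset of the boundary of a simplex; this carries the standard spherical matching obtained by pivoting on a fixed element of $S \setminus T$, whose two critical cells are a vertex and the complementary facet. The remaining over sets are the links of the vertices and edges of $X_P$, and these, together with $P$ itself, are where the real work lies: here the spanworthiness condition is active and carves the relevant posets out of the Boolean lattice in a way that is \emph{not} a simplex boundary.

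The hardest step is thus producing spherical matchings on $P$ and on the vertex- and edge-links. Pivoting on a single fixed element fails precisely because of the spanworthiness ``cliff'': adjoining one vertex to a spanworthy set of type $(1,1)$ or $(1,0)$ leaves the spanworthy region, so a single pivot strands the entire graph part $K_{|A|,|B|}$ and the lowest layer of higher cells, leaving far more than two critical cells. The route I would take is a staged matching that repairs this cliff. First pivot on a distinguished $b_0 \in B$ to match most cells; then, among the cells left unmatched, namely the singletons and edges avoiding $b_0$ together with the lowest bulk cells meeting $b_0$, run a second pivot on a distinguished $a_0 \in A$ and use the covering relations in which an edge of type $(1,1)$ is covered by a square of type $(2,2)$ to push the stranded edges up into the higher cells. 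The two surviving critical cells should be one chosen vertex and one chosen maximal (facet) cell, so that the matching is spherical. The technical heart is to organize these stages so that the combined matching is acyclic and leaves exactly two critical cells, uniformly across $P$ and all the vertex- and edge-links; this is the step I expect to be genuinely delicate, and is presumably what makes the construction ``not quite so easy.'' Once these spherical matchings are in hand, \Thmref{plsphere} closes the induction and yields that the order complex of $P$ is a PL-triangulated sphere.
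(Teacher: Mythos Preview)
Your proposal is correct and follows essentially the same route as the paper: induction on $|S|$ via \Thmref{plsphere}, under sets handled by the inductive hypothesis, over sets at $T$ with $|T|\ge 4$ identified with simplex boundaries, and explicit spherical matchings for $P$ and for the vertex and edge over sets built from a staged pivot on a distinguished $b\in B$ and $a\in A$ together with the special $(1,1)\to(2,2)$ covering. The paper additionally isolates $|S|=4$ (the $4$-cycle) as a base case and, crucially, constructs a single five-family acyclic matching on $P$ whose restrictions to the vertex and edge over sets are automatically spherical---precisely the uniformity you anticipate needing.
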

\begin{proof}
  Let $A = \{ a_0, a_1, \ldots, a_m\}$ and
  $B= \{ b_0, b_1, \ldots, b_n\}$.
  
  If $m = -1$ or $n = -1$ then, by spanworthiness, $S$ is a singleton
  and so $P$ is empty.  Then $P$ is the cell poset of the empty
  simplicial complex, i.e., the PL-triangulated $-1$-sphere.  The
  PL-triangulated $-1$-sphere is equal to its own barycentric
  subdivision $O_P$ so the theorem holds in this case.

  Assume that $m \ge 0$ and $n \ge 0$.  If $m = 0$ or $n = 0$ then, by
  spanworthiness, $m = n = 0$ and $P$ is the poset with two
  incomparable elements.  This is the cell poset of the two point
  simplicial complex, i.e., the PL-triangulated $0$-sphere.  The
  PL-triangulated $0$-sphere is equal to its own barycentric
  subdivision $O_P$ so the theorem holds in this case.

  Assume that $m \ge 1$ and $n \ge 1$.  If $m = n = 1$ then the
  elements of $P$ are the singletons and the $\{a_i\}\sqcup\{b_j\}$
  for $i$ and $j$ ranging over $0$ and $1$.  So $P$ is isomorphic to
  the cell poset of the $4$-cycle and so the theorem holds.

  So, by symmetry, we may assume that $m \ge 1$ and $n > 1$.  Assume
  that the theorem holds for all $S$ of lesser cardinality.  Then the
  order complexes of the under sets of $P$ are PL-triangulated
  spheres.  So, by \Thmref{cellpos}, $P$ is a cell poset and, by
  \Thmref{plsphere}, it suffices to show that the order complexes of
  $P$ and all of its over sets are homotopy equivalent to spheres.

  Let $T \in P$.  Spanworthiness implies that $|T| \neq 3$.  If
  $|T| > 3$ then every proper subset of $S$ containing $T$ is
  spanworthy and so $O_T$ is isomorphic to the cell poset of the
  boundary of a simplex and so has order complex homeomorphic to a
  sphere.  So it remains only to show that the order complexes of $P$
  and $O_T$ for $|T| \in \{1,2\}$ are homotopy equivalent to spheres.
  By \Rmkref{lpsphere} it suffices to show that $P$ and such $O_T$
  have spherical matchings.  By symmetry we need only consider the
  cases $T = \{a_0\} \sqcup \emptyset$, $T = \emptyset \sqcup \{b_0\}$
  and $T = \{a_0\} \sqcup \{b_0\}$.

  Consider the following families of edges of the Hasse diagram
  $\Gamma_P$ of $P$.
  \begin{align*} 
    &M_1 = \Biggl\{\hspace{-7pt}&\{a_i\}\sqcup\emptyset &\to \{a_i\}\sqcup\{b_n\} &&& \Biggr\} \\
    &M_2 = \Biggl\{\hspace{-7pt}&\emptyset\sqcup\{b_j\} &\to \{a_m\}\sqcup\{b_j\} &&\sth
    &\text{$j \neq n$}\Biggr\} \\
    &M_3 = \Biggl\{\hspace{-7pt}&\{a_i\}\sqcup\{b_j\} &\to \{a_i, a_m\}\sqcup\{b_j, b_n\} &&\sth
    &\text{$i \neq m$, $j \neq n$} \Biggr\} \\
    &M_4 = \Biggl\{\hspace{-7pt}&A'\sqcup\{b_j,b_n\} &\to
    \bigl(A'\cup \{a_m\}\bigr)\sqcup\{b_j, b_n\} &&\sth
    & \begin{array}{r}
        \text{$a_m \notin A'$, $j \neq n$} \\
        \text{$|A'| \ge 2$}
    \end{array} \Biggr\} \\
    &M_5 = \Biggl\{\hspace{-7pt}&A'\sqcup B' &\to A' \sqcup \bigl(B'\cup\{b_n\}\bigr) &&\sth
    & \begin{array}{r}
        \text{$b_n \notin B'$} \\ 
        \text{$A'\sqcup B' \neq A \sqcup \bigl(B \setminus \{b_n\}\bigr)$} \\
        \text{$|A'| \ge 2$, $|B'| \ge 2$}
      \end{array}
    \Biggr\}
  \end{align*}

  Recall that $B = \{b_0,b_1,\ldots,b_n \}$ and we have assumed
  $n > 1$.  Thus $|B| > 2$ and so the terminal endpoints of edges in
  $M_3$ and $M_4$ are proper subsets of $A \sqcup B$ and hence are
  elements of $P$.

  The endpoints of these edges from different families or from
  different ends of edges in the same family can be distinguished by
  the cardinality of their parts and by the presence of $a_m$ and
  $b_n$, as shown in \Tabref{match}.  Moreover the initial endpoints
  of two edges from the same family are equal if and only if their
  terminal endpoints are equal.  Hence we see that $M = \cup_i M_i$
  forms a matching on $P$.

  \begin{table}
    \centering
    \begin{tabular}{l l l l l l l l l}
      \hline
      Family & \multicolumn{4}{l}{Initial Endpoint}
      & \multicolumn{4}{l}{Terminal Endpoint} \\
      \hline
      & \multicolumn{2}{l}{Cardinalities} & $a_m$ & $b_n$
      & \multicolumn{2}{l}{Cardinalities} & $a_m$ & $b_n$ \\
      $M_1$ & $1$ & $0$ & & $\bot$
      & $1$ & $1$ & & $\top$ \\
      $M_2$ & $0$ & $1$ & $\bot$ & $\bot$
      & $1$ & $1$ & $\top$ & $\bot$ \\
      $M_3$ & $1$ & $1$ & $\bot$ & $\bot$
      & $2$ & $2$ & $\top$ & $\top$ \\
      $M_4$ & $\ge 2$ & $2$ & $\bot$ & $\top$
      & $>2$ & $2$ & $\top$ & $\top$ \\
      $M_5$ & $\ge 2$ & $\ge 2$ & & $\bot$
      & $\ge 2$ & $> 2$ & & $\top$ \\
      \hline
    \end{tabular}
    \caption{Distinguishing characteristics of the endpoints of edges
      in the families of edges described in the proof of
      \Thmref{bsbd}.  Under $a_m$ or $b_n$, the symbol $\top$
      indicates that this element is present in every member of the
      family and the symbol $\bot$ indicates that this element is not
      present in any member of the family.}
    \tablabel{match}
  \end{table}

  Let $\Gamma_P^M$ be the directed graph obtained from $\Gamma_P$ by
  reversing the direction of each edge in $M$.  By
  \Propref{alternate}, to show that $M$ is an acyclic matching we need
  only show that $\Gamma_P^M$ does not contain any directed cycles
  whose edges alternate between being contained and not contained in
  $M$.  To do this it suffices to define a function
  $\alpha\colon P \to \N$ such that $\alpha(T_2) < \alpha(T_0)$ for
  any directed path \[ T_0 \xto{e_0} T_1 \xto{e_1} T_2 \]
  of $\Gamma_P^M$ with $e_0 \in M$ and $e_1 \notin M$.  Note that in
  $\Gamma_P$, $e_0$ is directed from $T_1$ to $T_0$ so we have the
  following inclusions. \[ T_0 \supsetneq T_1 \subsetneq T_2 \]
  We define $\alpha$ as follows. \[ \alpha(T) = %
  \begin{cases}
    0, & \text{$a_m \notin T$ and $b_n \notin T$} \\
    1, & \text{$a_m \in T$ and $b_n \notin T$} \\
    2, & \text{$a_m \notin T$ and $b_n \in T$} \\
    3, & \text{$a_m \in T$ and $b_n \in T$}
  \end{cases} \]
  We may think of $\alpha$ as a function summing the weights on the
  elements of $T$, where $a_m$ is assigned a weight of $1$ and $b_n$
  is assigned a weight of $2$ and all remaining elements have zero
  weight.  Since $T_1 = T_0 \setminus A$ for some nonempty
  $A \subseteq \{a_m,b_n \}$, we have $\alpha(T_1) < \alpha(T_0)$.
  Suppose $\alpha(T_2) = \alpha(T_0)$.  Then
  $T_1 \cup A \subseteq T_2$ and, since
  $|T_0| - |T_1| = |T_2| - |T_1|$, we have $T_0 = T_2$.  This is a
  contradiction since a pair of vertices of $\Gamma_P^M$ may be joined
  by at most one edge and this edge is directed in a unique way.
  Suppose $\alpha(T_2) > \alpha(T_0)$.  Then
  $T_1 = T_0 \setminus \{ a_m \}$ and $T_2 = T_1 \cup \{b_n\}$.  The
  equality $T_1 = T_0 \setminus \{ a_m \}$ implies that $e_0$ is the
  reverse of an edge in $M_2$ or $M_4$.  The
  equality $T_2 = T_1 \cup \{b_n\}$ and the fact that $T_2 \neq T_1$
  implies that $b_n \notin T_1$.  This rules out the possibility that
  $e_0$ is the reverse of an edge in $M_4$.  Thus we have
  \[ \{a_m\}\sqcup\{b_j\} \xto{e_0} \emptyset\sqcup\{b_j\} \]
  and so $T_1 = \emptyset \sqcup \{ b_j \}$ and
  $T_2 = \emptyset \sqcup \{ b_j, b_n \}$ which is not spanworthy, a
  contradiction.  We have established that $M$ is an acyclic matching.
  
  Let $T \in P$.  Since the Hasse diagram of $O_T$ is an induced
  subgraph of $\Gamma_P$, the subset $M_T \subset M$ consisting of all
  edges both of whose endoints are contained in $O_T$ is an acyclic
  matching on $O_T$.  It remains only to show that $M$ is spherical on
  $P$ and that $M_T$ is spherical on $O_T$ for
  $T = \{a_0\} \sqcup \emptyset$, $T = \emptyset \sqcup \{b_0\}$ and
  $T = \{a_0\} \sqcup \{b_0\}$.  In fact it will suffice to prove that
  $M$ is spherical on $P$ with critical elements
  $\emptyset \sqcup \{b_n\}$ and
  $A \sqcup \bigl(B \setminus \{b_n\}\bigr)$.  Indeed, in this case
  the only critical element of $M$ contained in $O_T$ would be
  $A \sqcup \bigl(B \setminus \{b_n\}\bigr)$.  The only other possible
  critical elements of $O_T$ would arise from edges of $M$ having one
  endpoint in $O_T$ and the other endoint in $P \setminus O_T$.  But
  there is a unique such edge of $M$, namely the edge with initial
  endpoint $T$.  Hence $M_T$ would have two critical elements.
  
  We now prove that $M$ is spherical with critical elements
  $\emptyset \sqcup \{b_n\}$ and
  $A \sqcup \bigl(B \setminus \{b_n\}\bigr)$.  First we verify that
  these elements are indeed unmatched by $M$.  Singletons in $B$
  appear as endpoints only in $M_2$ where $\emptyset \sqcup \{b_n\}$
  is not present so $\emptyset \sqcup \{b_n\}$ is critical.  The
  element $A \sqcup \bigl(B \setminus \{b_n\}\bigr)$ is maximal in $P$
  and so may only appear as a terminal endpoint of an edge of $M$.
  These all contain $b_n$ except those in $M_2$ where they have the
  form $\{a_m\} \sqcup \{b_j\}$.  Such an element cannot be equal to
  $A \sqcup \bigl(B \setminus \{b_n\}\bigr)$ since then
  $A \sqcup B = \{a_m\} \sqcup \{b_j,b_n\}$ which is not spanworthy.

  Now, suppose $T = A' \sqcup B'$ is an element of $P$ that is not
  equal to $\emptyset \sqcup \{b_n\}$ or
  $A \sqcup \bigl(B \setminus \{b_n\}\bigr)$.  We will show that $T$
  is matched in $M$.  We consider the following cases separately: (I)
  $|T| = 1$, (II) $|T| = 2$, (III) $|T| = 4$, (IV) $|T|> 4$ and
  $|B'| = 2$, (V) $|T| > 4$ and $|B'| > 2$.

  \paragraph{Case I. $|T| = 1$.}  If $|A'| = 1$ then $T$ is an initial
  endpoint in $M_1$.  Otherwise $|B'| = 1$ and $T$ is an initial
  endpoint in $M_2$.

  \paragraph{Case II. $|T| = 2$.}  Then $|A'| = |B'| = 1$ by
  spanworthiness.  If $b_n \in T$ then $T$ is a terminal endpoint of
  $M_1$.  Otherwise $T$ is a terminal endpoint in $M_2$ if $a_m \in T$
  and $T$ is an initial endpoint in $M_3$ if $a_m \notin T$.

  \paragraph{Case III. $|T| = 4$.}  Then $|A'| = |B'| = 2$ by
  spanworthiness.  If $b_n \notin T$ then $T$ is an initial endpoint
  of $M_5$.  Otherwise $T$ is a terminal endpoint in $M_3$ if
  $a_m \in T$ and $T$ is an initial endpoint in $M_4$ if
  $a_m \notin T$.

  \paragraph{Case IV. $|T|> 4$ and $|B'| = 2$.}  Then $|A'| > 2$.  If
  $b_n \notin T$ then $T$ is an initial endpoint of $M_5$.  Otherwise
  $T$ is a terminal endpoint in $M_4$ if $a_m \in T$ and $T$ is an
  initial endpoint in $M_4$ if $a_m \notin T$.

  \paragraph{Case V. $|T| > 4$ and $|B'| > 2$.}  Then $|A'| \ge 2$ by
  spanworthiness.  If $b_n \in T$ then $T$ is a terminal endpoint in
  $M_5$.  Otherwise $T$ is an initial endpoint in $M_5$.
\end{proof}

\begin{cor}
  \corlabel{bisimplex} Let $S = A \sqcup B$ be a spanworthy
  bipartitioned set and let $P'$ be the collection of spanworthy
  subsets of $S$ ordered by inclusion.  Then $P'$ is the cell poset of
  a regular CW complex homeomorphic to a ball.
\end{cor}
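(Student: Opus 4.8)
The key observation is that $P'$ is obtained from the poset $P$ of \Thmref{bsbd} simply by adjoining the single element $S$ as a new maximum: the spanworthy subsets of $S$ are precisely the spanworthy proper subsets of $S$ together with $S$ itself, so $P' = P \cup \{S\}$ with $T < S$ for every $T \in P$. The plan is therefore to deduce the statement entirely from \Thmref{bsbd}. First I would verify, via Bj\"orner's criterion (\Thmref{cellpos}), that $P'$ is a cell poset; then I would observe that $S$ is the unique maximal element of $P'$, so that $X_{P'}$ is the closure of a single top cell and hence a ball.

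To apply \Thmref{cellpos} I would compute the under sets of $P'$. For every $x \in P'$ other than $S$, the element $S$ is not below $x$, so $U^{P'}_x = U^P_x$; these under sets have order complexes homeomorphic to spheres because $P$ is already a cell poset by \Thmref{bsbd}. The only genuinely new under set is $U^{P'}_S = P$ itself, whose order complex is $O_P$; by \Thmref{bsbd} this is a PL-triangulated sphere, in particular homeomorphic to a sphere. Hence all under sets of $P'$ have spherical order complexes, and \Thmref{cellpos} gives that $P'$ is a cell poset. The convention that the empty complex is the $(-1)$-sphere makes this argument uniform across the degenerate cases where $S$ is a singleton and $P$ is empty.

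Finally, since $S \supsetneq T$ for every spanworthy proper subset $T$, the element $S$ is the unique maximal element of $P'$, so every cell of $X_{P'}$ is a face of the top cell $S$. Thus $X_{P'}$ equals the closure $\overline{S}$ of this top cell, and regularity of $X_{P'}$ means the characteristic map of $S$ is an embedding of a closed ball with image $\overline{S} = X_{P'}$; therefore $X_{P'}$ is homeomorphic to a ball. I do not expect a real obstacle here beyond the under-set bookkeeping and the uniform treatment of the low-dimensional degenerate cases: the substantive content---that the boundary subcomplex $X_P = \bd S$ is genuinely a sphere, equivalently that its barycentric subdivision $O_P$ is a sphere---has already been supplied by \Thmref{bsbd}.
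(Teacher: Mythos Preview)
Your proposal is correct and follows essentially the same route as the paper: both use Bj\"orner's criterion (\Thmref{cellpos}) by observing that the under sets of $P'$ are exactly the under sets of $P$ together with $P$ itself, and then invoke \Thmref{bsbd} to see that all of these have spherical order complexes. Your explicit argument that $X_{P'}$ is a ball because $S$ is the unique maximal cell is a welcome elaboration of a point the paper leaves implicit.
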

Note that the difference between $P'$ in \Corref{bisimplex} and $P$ in
\Thmref{bsbd} is that $P'$ contains $S$.
\begin{proof}[Proof of \Corref{bisimplex}]
  By \Thmref{cellpos}, it suffices to show that the order complexes of
  the under sets of $P'$ are homeomorphic to spheres.  Since $S$ is
  the maximum in $P'$, the under sets of $P'$ are
  $P = P' \setminus \{S\}$ and the under sets of $P$.  By
  \Thmref{bsbd}, we know that $P$ is the cell poset of a regular CW
  complex $X_P$ that is homeomorphic to a sphere.  Hence, by
  \Thmref{cellpos}, the under sets of $P$ have order complexes
  homeomorphic to spheres and the order complex of $P$ is the
  barycentric subdivision of $X_P$ and so is also homeomorphic to a
  sphere.
\end{proof}

Let $S = A \sqcup B$ be a spanworthy bipartitioned set and let $P'$ be
the collection of spanworthy subsets of $S$ ordered by inclusion.  By
\Corref{bisimplex}, $P'$ is the cell poset of a regular CW complex
$X_{P'}$ that is homeomorphic to a ball.  A regular CW complex
isomorphic to $X_{P'}$ is an \defterm{$(m,n)$-bisimplex} where
$m = |A|-1$ and $n = |B|-1$.  We let $\bs^{m,n}$ denote an
$(m,n)$-bisimplex.  See \Figref{lowdbs}.

\begin{prop}
  There is an isomorphism $\bs^{m,n} \isomor \bs^{n,m}$.
\end{prop}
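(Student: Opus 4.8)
The plan is to exhibit the isomorphism at the level of cell posets and then invoke the fact, recorded in \Secref{cellposet}, that a cell poset uniquely determines its regular CW complex $X_P$. Recall that $\bs^{m,n}$ is by definition a regular CW complex isomorphic to $X_{P'}$, where $P'$ is the poset of spanworthy subsets of a bipartitioned set $S = A \sqcup B$ with $|A| = m+1$ and $|B| = n+1$, ordered by inclusion. Likewise $\bs^{n,m} \isomor X_{Q'}$, where $Q'$ is the poset of spanworthy subsets of a bipartitioned set $S' = A' \sqcup B'$ with $|A'| = n+1$ and $|B'| = m+1$. It therefore suffices to produce a poset isomorphism $P' \isomor Q'$.

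First I would choose a bijection $\phi \colon S \to S'$ that interchanges the two parts, that is, with $\phi(A) = B'$ and $\phi(B) = A'$; such a $\phi$ exists since $|A| = m+1 = |B'|$ and $|B| = n+1 = |A'|$. Taking images of subsets, $\phi$ induces an inclusion-preserving bijection from the subsets of $S$ to the subsets of $S'$, with inverse induced by $\phi^{-1}$. The only thing that requires checking is that this bijection carries spanworthy subsets to spanworthy subsets, so that it restricts to a bijection $P' \to Q'$.

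The key point is that spanworthiness is symmetric in the two parts of a bipartition. For $T \subseteq S$, the induced bipartition of $\phi(T)$ has parts $\phi(T) \cap B' = \phi(T \cap A)$ and $\phi(T) \cap A' = \phi(T \cap B)$, so it is exactly the induced bipartition of $T$ with its two sides interchanged. The defining condition $|T \cap A| \le 1 \Longleftrightarrow |T \cap B| \le 1$ is invariant under swapping the two sides of the biconditional, and $\phi(T) = \emptyset$ iff $T = \emptyset$. Hence $T$ is spanworthy iff $\phi(T)$ is spanworthy, so $\phi$ restricts to an inclusion-preserving bijection $P' \to Q'$ with inclusion-preserving inverse, that is, a poset isomorphism. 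Since $X_{P'}$ and $X_{Q'}$ are determined by their cell posets, this yields $\bs^{m,n} \isomor \bs^{n,m}$.

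I do not expect any genuine obstacle here: the whole argument rests on the visibly symmetric form of the definition of spanworthiness, and all the substantive content of the bisimplex construction has already been carried out in \Thmref{bsbd} and \Corref{bisimplex}.
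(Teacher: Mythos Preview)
Your proof is correct and follows the same approach as the paper, which simply writes ``This is clear from the symmetry of the definition.'' You have merely unpacked what that symmetry means explicitly at the level of cell posets.
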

\begin{proof}
  This is clear from the symmetry of the definition.
\end{proof}

\begin{prop}
  The cells of a bisimplex are all bisimplices.
\end{prop}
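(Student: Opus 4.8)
The plan is to identify, for each cell of a bisimplex, the cell poset of that cell with the cell poset of a bisimplex spanned on a bipartitioned subset. Recall that a bisimplex is $\bs = X_{P'}$, where $P'$ is the collection of spanworthy subsets of a spanworthy bipartitioned set $S = A \sqcup B$, ordered by inclusion. A cell $x$ of $\bs$ therefore corresponds to some spanworthy $T \in P'$, and I would equip $T$ with its induced bipartition $T = A_T \sqcup B_T$, where $A_T = T \cap A$ and $B_T = T \cap B$. The first step is to observe that the closed cell $x$, viewed as a regular CW subcomplex of $\bs$, has cell poset equal to the principal ideal $\{T' \in P' \sth T' \subseteq T\}$, since this is precisely the downward closed subset of $P'$ consisting of all faces of $x$.

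The key step is to check that this principal ideal coincides with the collection of spanworthy subsets of the bipartitioned set $T$. Indeed, for $T' \subseteq T$ we have $T' \cap A = T' \cap A_T$ and $T' \cap B = T' \cap B_T$, so the spanworthiness of $T'$ as a subset of $S$ is literally the same condition as its spanworthiness as a subset of $T$. Hence the principal ideal is exactly the poset to which \Corref{bisimplex} applies for the spanworthy bipartitioned set $T$, and so it is the cell poset of the bisimplex $\bs^{|A_T|-1,\,|B_T|-1}$. Since a cell poset determines its regular CW complex uniquely, the closed cell $x$ is isomorphic to $\bs^{|A_T|-1,\,|B_T|-1}$, and is therefore a bisimplex.

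Finally I would dispose of the degenerate cases. If $|A_T| \le 1$ or $|B_T| \le 1$, then spanworthiness of $T$ forces both $|A_T| \le 1$ and $|B_T| \le 1$, so $|T| \in \{1,2\}$ and $x$ is a vertex or an edge, which we have already agreed to regard as bisimplices. I expect no genuine obstacle here, as the argument is essentially bookkeeping: the only point requiring care is the verification that passing to the induced bipartition on $T$ does not change which subsets are spanworthy, together with the observation that the low-dimensional cells $x$ with $|T| \in \{1,2\}$ are exactly the degenerate bisimplices. The crux is the identification of the closed cell's cell poset with the bisimplex construction of \Corref{bisimplex} applied to $T$.
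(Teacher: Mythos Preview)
Your proposal is correct and follows essentially the same approach as the paper: identify the cell poset of a cell $x$ with the principal ideal $U_T \cup \{T\}$ at the corresponding spanworthy $T$, observe that this coincides with the poset of spanworthy subsets of $T$ with its induced bipartition, and invoke \Corref{bisimplex}. The paper's proof is terser and leaves implicit the verification that spanworthiness relative to $S$ agrees with spanworthiness relative to $T$ and the treatment of the $|T|\le 2$ cases, both of which you spell out explicitly.
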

\begin{proof}
  The cell poset $P$ of a bisimplex $\bs$ is isomorphic to the poset
  of spanworthy subsets of a spanworthy set $A \sqcup B$.  The cell
  poset of a cell $x$ of $\bs$ corresponds to $P' = U_T \cup \{T\}$
  for some spanworthy $T$.  Hence, the cell poset of $x$ is isomorphic
  to the poset of spanworthy subsets of the spanworthy set $T$.
\end{proof}

\begin{prop}
  \proplabel{bsoneskel} The $1$-skeleton of $\bs^{m,n}$ is isomorphic
  to the complete bipartite graph $K_{m+1,n+1}$ on $m+1$ and $n+1$
  vertices.
\end{prop}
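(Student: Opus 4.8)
The plan is to read the $0$-cells and $1$-cells of $\bs^{m,n} = X_{P'}$ directly off the poset $P'$ of spanworthy subsets of $S = A \sqcup B$, where $|A| = m+1$ and $|B| = n+1$, and then recognize the resulting graph. Write $A = \{a_0,\ldots,a_m\}$ and $B = \{b_0,\ldots,b_n\}$. Since $P'$ is a cell poset by \Corref{bisimplex}, the under set $U_x$ at a spanworthy subset $x$ is the cell poset of $\bd x$, and its order complex $\Delta_{U_x}$ is the barycentric subdivision of $\bd x$, a sphere of dimension $\dim x - 1$. Thus $\dim x = \dim \Delta_{U_x} + 1$, which is the only tool I need to sort cells by dimension.

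First I would pin down the vertices. The empty set is not spanworthy, so each singleton $\{a_i\}$ and $\{b_j\}$ has empty under set; its order complex is $\Sph^{-1}$ and so it is a $0$-cell. Conversely, any spanworthy set of cardinality $\ge 2$ properly contains a spanworthy singleton and hence is not minimal in $P'$. So the $0$-cells are exactly the $m+1$ singletons drawn from $A$ and the $n+1$ singletons drawn from $B$.

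Next I would identify the edges. By spanworthiness a cardinality-$2$ subset lies in $P'$ precisely when it has the form $\{a_i\}\sqcup\{b_j\}$, and then its under set is the two-element antichain $\{\{a_i\},\{b_j\}\}$, whose order complex is $\Sph^0$; hence $\{a_i,b_j\}$ is a $1$-cell whose two endpoints are the vertices $\{a_i\}$ and $\{b_j\}$. It remains to rule out stray $1$-cells of larger cardinality. This is the one step that takes real care, and it rests entirely on the combinatorics of spanworthiness: there are no spanworthy triples, and every spanworthy set of cardinality $\ge 4$ splits with both parts of size $\ge 2$. Such a set therefore contains some spanworthy pair $\{a_i,b_j\}$ as a proper subset, so its under set already contains a $1$-cell and its dimension is at least $2$. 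Consequently the $1$-cells are exactly the $(m+1)(n+1)$ pairs $\{a_i,b_j\}$.

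Finally I would assemble the incidence data. The $1$-skeleton is the graph whose vertex set is the collection of singletons, partitioned into the $m+1$ vertices $\{a_i\}$ and the $n+1$ vertices $\{b_j\}$, with a single edge $\{a_i,b_j\}$ joining $\{a_i\}$ to $\{b_j\}$ for every $i$ and $j$ and no edge within either part. The assignment $\{a_i\}\mapsto a_i$ and $\{b_j\}\mapsto b_j$ is then an isomorphism onto $K_{m+1,n+1}$, completing the proof.
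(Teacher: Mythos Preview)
Your proof is correct and follows essentially the same approach as the paper: both arguments read the $0$- and $1$-cells directly off the poset of spanworthy subsets, identifying vertices with singletons and edges with the pairs $\{a_i\}\sqcup\{b_j\}$. The paper's version is terser, observing that $1$-cells are exactly the elements covering singletons (using the grading of a cell poset by dimension), whereas you compute dimensions via the order complex of the under set and explicitly rule out cardinality-$\ge 4$ sets; these are equivalent formulations of the same identification.
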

\begin{proof}
  let $P$ be the cell poset of $\bs^{m,n}$ viewed as the poset of
  spanworthy subsets of $A \sqcup B$ with $|A| = m+1$ and $|B| = n+1$.
  The $0$-cells of $\bs^{m,n}$ correspond to the minimal elements of
  $P$.  These are precisely the singletons in $A \sqcup B$ and so we
  may identify the $0$-skeleton of $\bs^{m,n}$ with $A \sqcup B$.  The
  $1$-cells of $\bs^{m,n}$ correspond to those elements of $P$ that
  cover singletons.  These are precisely the sets $\{a\} \sqcup \{b\}$
  with $a \in A$ and $b \in B$.
\end{proof}

\begin{prop}
  Let $m \ge 1$ and let $n \ge 1$.  The dimension of $\bs^{m,n}$ is
  $m+n$.
\end{prop}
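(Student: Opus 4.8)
The plan is to reduce the computation to the single top cell. Writing $\bs^{m,n} = X_{P'}$ as in \Corref{bisimplex}, where $P'$ is the poset of spanworthy subsets of $S = A \sqcup B$ with $|A| = m+1$ and $|B| = n+1$, the maximum element of $P'$ is $S$ itself. Since the grading of a cell poset is by dimension and is monotonic, $S$ carries the largest grade, so $\dim \bs^{m,n} = \dim S = |S|$. Because $P'$ is graded by dimension, every minimal element is a singleton of grade $0$ and every covering relation raises the grade by exactly $1$; hence any \emph{saturated} chain (one in which each consecutive pair is a covering relation) running from a singleton up to $S$ has a number of steps equal to $|S|$. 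So it suffices to exhibit one such chain and count its length.

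First I would write down the explicit chain
\[ \{a_0\}\sqcup\emptyset \subsetneq \{a_0\}\sqcup\{b_0\} \subsetneq \{a_0,a_1\}\sqcup\{b_0,b_1\} \subsetneq \cdots \subsetneq A\sqcup\{b_0,b_1\} \subsetneq \cdots \subsetneq A\sqcup B, \]
obtained by adjoining $b_0$ to the vertex $\{a_0\}\sqcup\emptyset$, then adjoining $a_1$ and $b_1$ simultaneously to reach the square $\bs^{1,1}$, then adjoining the remaining elements $a_2,\ldots,a_m$ of $A$ one at a time, and finally the remaining elements $b_2,\ldots,b_n$ of $B$ one at a time. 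This uses only the hypotheses $m,n \ge 1$, which guarantee $|A|,|B| \ge 2$ so that the second step is possible. Counting the steps gives $1 + 1 + (m-1) + (n-1) = m+n$, with the last two phases simply empty when $m=1$ or $n=1$.

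The main obstacle—indeed the only subtle point—is verifying that this chain is saturated, i.e.\ that each inclusion is a covering relation in $P'$. Every step except the second adjoins a single element, and a one-element extension admits no set strictly between it and its predecessor, so those steps are automatically coverings. The second step, from the edge $\{a_0\}\sqcup\{b_0\}$ to the square $\{a_0,a_1\}\sqcup\{b_0,b_1\}$, adjoins two elements, so here I must rule out an intervening spanworthy set. The only subsets strictly between the two endpoints are $\{a_0,a_1\}\sqcup\{b_0\}$ and $\{a_0\}\sqcup\{b_0,b_1\}$, each of cardinality $3$, and spanworthiness excludes every set of cardinality $3$. Thus no spanworthy set lies strictly between, this step is also a covering relation, the chain is saturated, and therefore $\dim \bs^{m,n} = |S| = m+n$.
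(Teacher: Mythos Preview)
Your proof is correct and follows essentially the same approach as the paper: both identify the cell poset $P'$ with the spanworthy subsets of $A \sqcup B$, write down the identical maximal chain from a singleton to $S$, and count its length to obtain $m+n$. Your explicit verification that the two-element jump from the edge $\{a_0\}\sqcup\{b_0\}$ to the square $\{a_0,a_1\}\sqcup\{b_0,b_1\}$ is a covering relation (because the intermediate sets have cardinality~$3$ and are therefore not spanworthy) spells out a point the paper leaves implicit; otherwise the arguments coincide.
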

\begin{proof}
  Let $P$ be the poset of spanworthy subsets of $A \sqcup B$ with
  $A = \{a_0, a_1, \ldots, a_m\}$ and $B = \{b_0, b_1, \ldots b_n\}$.
  Identify $P$ with the cell poset of $\bs^{m,n}$.  Since $\bs^{m,n}$
  is homeomorphic to a ball of some dimension $k$, the maximal chains
  of $P$ all have cardinality $k+1$.  One such maximal chain is the
  following.
  \begin{align*}
    &\{a_0\} \sqcup \emptyset \subsetneq \{a_0\} \sqcup \{b_0\} \subsetneq
    \{a_0, a_1\} \sqcup \{b_0, b_1\} \\
    &\subsetneq \{a_0, a_1, a_2\} \sqcup \{b_0, b_1\}
    \subsetneq \{a_0, a_1, a_2, a_3\} \sqcup \{b_0, b_1\} \subsetneq \cdots
    \subsetneq \{a_0, \ldots, a_m \} \sqcup \{b_0, b_1\} \\
    &\subsetneq \{a_0, \ldots, a_m\} \sqcup \{b_0, b_1, b_2\} \subsetneq \cdots
    \subsetneq \{a_0, \ldots, a_m \} \sqcup \{b_0, \ldots, b_n\}
  \end{align*}
  This chain has cardinality $|A| + |B| - 1$ where the $-1$ is due to
  the jump
  $\{a_0\} \sqcup \{b_0\} \subsetneq \{a_0, a_1\} \sqcup \{b_0,
  b_1\}$.  Hence $\bs^{m,n}$ has dimension
  $k+1-1 = |A|+|B|-1 - 1 = m+ n$.
\end{proof}

\section{Bisimplicial Complexes}
\seclabel{bscomplex}

A full subcomplex $Y$ of a regular CW complex $X$ is \defterm{full} if
$\bd x \subset Y$ implies $x \subset Y$ for any cell $x$ of $X$.  A
\defterm{bisimplicial complex} is a regular CW complex $X$ such that
each cell $x$ of $X$ is isomorphic to a bisimplex and, for any two
bisimplices $x$ and $y$ of $X$, the intersection $x \cap y$ is a full
subcomplex of $X$.  Note that this implies that the bisimplices
themselves are full subcomplexes and, furthermore, that any finite
intersection of bisimplices is full.

A complete bipartite graph $K$ is \defterm{spanworthy} if it is
nonempty, connected and the bipartition on its vertex set is
spanworthy.  A spanworthy complete bipartite subgraph $K$ of the
$1$-skeleton $X^1$ of a bisimplicial complex $X$ \defterm{spans} a
bisimplex $\bs$ of $X$ if the $1$-skeleton $(\bs)^1$ of $\bs$ is equal
to $K$.  Note that at most one bisimplex may span $K$ since the
intersection of two distinct bisimplices $\bs$ and $\bs'$ spanning $K$
would be full in neither $\bs$ nor $\bs'$.  A bisimplicial complex $X$
is \defterm{flag} if every spanworthy complete bipartite subgraph $K$
of $X^1$ spans a bisimplex $\bs$.  We use the notation $\bs(A;B)$ to
denote $\bs$, where $A \sqcup B$ is the bipartitioned vertex set of
$K$.

\begin{defn}
  \defnlabel{bscomp} Let $\Gamma$ be a graph.  The \defterm{flag
    bisimplicial completion} $\bs(\Gamma)$ of $\Gamma$ is a flag
  bisimplicial complex defined inductively as follows.  The
  $1$-skeleton of $\bs(\Gamma)$ is $\Gamma$.  Now, assume the
  $(k-1)$-skeleton of $\bs(\Gamma)$ has been defined.  The
  $k$-skeleton is obtained by the following operation.  To each
  subcomplex isomorphic to some $\bd \bs^{m,n}$ with
  $\dim(\bs^{m,n}) = k$, glue in a copy of $\bs^{m,n}$ along the
  isomorphism.
\end{defn}

Note that if $X$ is a flag bisimplicial complex then $X = \bs(X^1)$.

Let $\Gamma$ be a finite bipartite graph.  We view $\Gamma^0$ as a
metric space with the shortest path metric.  The \defterm{metric
  sphere} $S_r(u) \subseteq \Gamma^0$ of radius $r$ about
$u \in \Gamma^0$ is the set of vertices of $\Gamma$ at distance $r$
from $u$.  If $u$ and $v$ are distinct vertices of $\Gamma$ then $u$
is \defterm{dominated} by $v$ if there is an inclusion
$S_1(u) \subset S_1(v)$ of neighbourhoods.

A finite bipartite graph $\Gamma$ is \defterm{bi-dismantlable} if
there exists a sequence
$\Gamma = \Gamma_1, \Gamma_2, \ldots, \Gamma_n$ of graphs ending on a
nonempty connected complete bipartite graph such that, for each
$i < n$, $\Gamma_{i+1}$ is a subgraph of $\Gamma_i$ induced on the
complement of $\{v_i\}$ for some $v_i$ dominated in $\Gamma_i$.

\begin{thm}
  \thmlabel{dismclps} Let $X$ be a finite flag bisimplicial complex
  with $X^1$ bipartite.  If $X^1$ is bi-dismantlable then $X$ is
  collapsible.
\end{thm}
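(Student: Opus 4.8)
The plan is to induct on the number of vertices of $X^1$, showing at each step that $X$ collapses onto a flag bisimplicial subcomplex whose $1$-skeleton has one fewer vertex, and then to invoke \Thmref{collapse} to promote Forman-theoretic matchings into honest collapses. For the base, if $X^1$ has one or two vertices then $X$ is a point or a single edge and is collapsible. We may assume throughout that $X^1$ is connected: this is necessary for collapsibility, and it is preserved when deleting a dominated vertex, since the neighbours of a vertex dominated by $w$ all remain adjacent to $w$.

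For the inductive step, suppose $X^1$ has at least three vertices. I first locate a dominated vertex: if $X^1$ is not complete bipartite then bi-dismantlability supplies a dominated $v$ directly, while if $X^1 = K_{p,q}$ is complete bipartite then the larger part has two vertices with identical neighbourhoods, which dominate one another, so a dominated $v$ again exists. Fix $w$ dominating $v$. Since $X^1$ is bipartite and $S_1(v) \subseteq S_1(w)$, the vertices $v$ and $w$ lie in the same part---say $A$---and are non-adjacent. Let $X'$ be the full subcomplex of $X$ on the remaining vertices; since $X$ is flag, $X' = \bs(X^1 \setminus \{v\})$, its $1$-skeleton is connected, bipartite and bi-dismantlable with one fewer vertex, and so $X'$ is collapsible by induction. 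The cells of $X$ not containing $v$ are exactly the cells of $X'$ and form a downward-closed set, so it suffices to find an acyclic matching on the cells of $X$ containing $v$ with no critical cells; \Thmref{collapse} will then give $X \searrow X'$.

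I will build this matching in two stages, each a single-element toggle, which is automatically acyclic: along a putative alternating directed cycle every matched cell contains $v$, the toggled element can only be removed along the reversed matching edges and can never be re-added along the remaining edges, so the cycle cannot close up. Every cell containing $v$ has the form $(\{v\} \cup A_0) \sqcup B'$ with $B' \subseteq S_1(v)$, and spanworthiness forces either $A_0 = \emptyset$ with $|B'| \le 1$, or $A_0 \neq \emptyset$ with $|B'| \ge 2$. The first matching toggles the dominating vertex $w$ in the $A$-part: for a cell with $|B'| \ge 2$ and $\emptyset \neq A_0 \not\ni w$, pair it with the cell obtained by inserting $w$. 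Because $S_1(v) \subseteq S_1(w)$, inserting $w$ yields a genuine spanworthy complete bipartite subgraph, hence a cell of the flag complex $X$, so these pairs are well defined. This matches every cell containing $v$ except a residual family: the vertex $\{v\}$, the edges $\{v\} \sqcup \{b\}$, and the cells $\{v, w\} \sqcup B'$ with $|B'| \ge 2$. The last are left over precisely because deleting $w$ from them would leave the \emph{non}-spanworthy set $\{v\} \sqcup B'$; this is exactly where the bipartite, open-neighbourhood nature of domination obstructs the naive simplicial argument. The cells of $X'$ together with this residual family form a downward-closed set, so \Thmref{collapse} collapses $X$ onto the corresponding subcomplex $X_1$.

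The residual family is in bijection with the subsets $B' \subseteq S_1(v)$, and under this bijection inclusion of cells corresponds to inclusion of subsets, so it forms a Boolean lattice graded by $|B'|$ (which equals the dimension of the corresponding cell). The second matching collapses it by toggling a fixed neighbour $b_0 \in S_1(v)$, which exists since $X^1$ is connected with at least two vertices. Again the relevant targets---the edge $\{v\} \sqcup \{b_0\}$, the squares $\{v,w\} \sqcup \{b,b_0\}$, and the larger cells $\{v,w\} \sqcup (B' \cup \{b_0\})$---are all cells of $X$, the toggle is acyclic, and its critical cells are exactly the cells of $X'$, which are downward closed. A final application of \Thmref{collapse} gives $X_1 \searrow X'$, and composing the two collapses yields $X \searrow X'$, completing the induction. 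The main obstacle, and the essential difference from the simplicial clique-complex argument, is precisely the appearance of this residual Boolean lattice: because domination in a bipartite graph is containment of \emph{open} neighbourhoods, $v$ and $w$ are non-adjacent and a single $w$-toggle cannot clear the low-dimensional cells at $v$; the secondary neighbour-toggle is what repairs this.
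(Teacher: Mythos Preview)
Your proof is correct and follows essentially the same strategy as the paper: induct by peeling off a dominated vertex $v$, build an acyclic matching on the cells containing $v$ with no critical cells there, and invoke \Thmref{collapse} to reach the full subcomplex on $X^0\setminus\{v\}$. In fact, once your two toggle stages are concatenated (with your dominator $w$ playing the role of the paper's $u$ and your chosen neighbour $b_0$ playing the role of the paper's $w$), the resulting matching coincides with the paper's families $M_1$--$M_4$.

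The organizational differences are worth noting. The paper inducts on the length of a bi-dismantling sequence and treats the complete-bipartite terminal case separately, reusing the spherical matching from \Thmref{bsbd} to collapse a single bisimplex; you instead observe that a complete bipartite graph on at least three vertices still has a dominated vertex, so the induction on vertex count needs only the trivial one- and two-vertex base cases. The paper presents a single four-part matching and defers the acyclicity check to the analogous argument in \Thmref{bsbd}; you split the same matching into two element toggles with an intermediate collapse, which makes the acyclicity of each stage immediate and isolates the ``residual Boolean lattice'' that distinguishes the bipartite setting from the simplicial one. Your route is slightly more self-contained and makes the role of the dominator versus the fixed neighbour more transparent; the paper's route highlights the parallel with the spherical matching used to build bisimplices in the first place.
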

\begin{proof}
  The proof is by induction on the length of the bi-dismantling
  sequence.

  In the base case, $X^1$ is a nonempty connected complete bipartite
  graph on some bipartitioned vertex set $S = A \sqcup B$.  Let
  $A = \{ a_0, \ldots, a_m \}$ and $B = \{ b_0, \ldots, b_n\}$.
  Without loss of generality $|A| \le |B|$.  If $X^1$ is not
  spanworthy then, as it is nonempty and connected, we have $|A| = 1$
  and $|B| \ge 2$.  Then the only spanworthy subgraphs of $X^1$ are
  its edges and vertices and so $X = X^1$.  But $X^1$ is a tree and so
  $X$ is collapsible.

  Suppose now that $X^1$ is spanworthy.  By flagness, $X$ is a
  bisimplex $\bs(A;B)$.  If $|A| + |B| \le 4$ then $\bs(A;B)$ is
  isomorphic to a vertex, an edge or a square.  These are collapsible.
  So we assume that $|A| \ge 2$ and $|B| > 2$.  Identify the cell
  poset of $\bs(A;B)$ with the poset $P'$ of nonempty spanworthy
  subsets of $A \sqcup B$.  Then the poset
  $P = P' \setminus \{ A \sqcup B \}$ is the cell poset of
  $\bd \bs(A;B)$.  The proof of \Thmref{bsbd} gives a spherical
  matching $M$ on $P$.  Let $M'$ be the matching obtained from $M$ by
  adding the following edge.
  \[ A \sqcup \bigl(B \setminus \{b_n\}\bigr) \to A \sqcup B \] Then
  $M'$ is acyclic and leaves only $\emptyset \sqcup \{ b_n \}$
  unmatched.  Hence, by \Thmref{collapse}, $X = \bs(A;B)$ is
  collapsible.
  
  Now, suppose suppose the bi-dismantling sequence has nonzero length
  with $v$ the first dominated vertex in the sequence.  Let $u$ be a
  dominator of $v$ in $X^1$.  Let $P$ be the cell poset of
  $X$. Consider the downward closed subset $Q$ of $P$ defined as
  follows.  \[ Q = \{ x \in P \sth x \not\ge v\} \]
  Then the subcomplex $X_Q = \bigcup Q$ is the full subcomplex of $X$
  induced on $X^0 \setminus \{v\}$ and so $X_Q$ is flag.  Moreover,
  $X_Q^1$ is the induced subgraph of $X^1$ obtained from $X^1$ by
  deleting $v$ and so $X_Q^1$ is dismantlable.  Hence $X_Q^1$ is
  collapsible by induction.  Therefore, by \Thmref{collapse}, it
  suffices to construct an acyclic matching $M$ on $P$ whose set of
  critical elements is $Q$.
  
  Let $w$ be any neighbour of $v$ in $X^1$.  Note that the vertices of
  any connected complete bipartite subgraph of $X^1$ containing $v$
  are at distance at most $2$ from $v$.  Consider the following
  families of edges in the Hasse diagram $\Gamma_P$ of $P$.
  \begin{align*}
    &M_1 = \Biggl\{&\bs\bigl(\{v\};\emptyset\bigr)
    &\to \bs\bigl(\{v\};\{w\}\bigr) &&& \Biggr\} \\
    &M_2 = \Biggl\{&\bs\bigl(\{v\};\{x\}\bigr)
    &\to \bs\bigl(\{u,v\};\{w,x\}\bigr) &&\sth
    &\text{$x \in S_1(v) \setminus \{w\}$} \Biggr\} \\
    &M_3 = \Biggl\{&\bs\bigl(\{u,v\};N\bigr)
    &\to \bs\bigl(\{u,v\};\{w\}\cup N\bigr) &&\sth
    & \begin{array}{r}
        \text{$N \subseteq S_1(v) \setminus \{w\}$} \\
        \text{$|N| \ge 2$}
    \end{array} \Biggr\} \\
    &M_4 = \Biggl\{&\bs\bigl(T\cup \{v\};N\bigr)
    &\to \bs\bigl(T \cup \{u,v\}; N\bigr) &&\sth
    & \begin{array}{r}
        \text{$T \subseteq S_2(v) \setminus \{u\}$} \\
        \text{$N \subseteq \bigcap_{y\in T\cup \{v\}} S_1(y)$} \\
        \text{$|T| \ge 1$, $|N| \ge 2$}
    \end{array} \Biggr\}
  \end{align*}
  The union $M = \bigcup_i M_i$ is an acyclic matching on $P$ whose
  set of critical elements is $Q$.  The argument is very similar to
  that in the proof of \Thmref{bsbd} with $w$ playing the role of
  $a_m$ and $u$ playing the role of $b_n$.
\end{proof}

\section{Quadric Complexes and Asphericity}
\seclabel{quadric}

\begin{figure}
  \centering
  \includegraphics[width=0.15\textwidth]{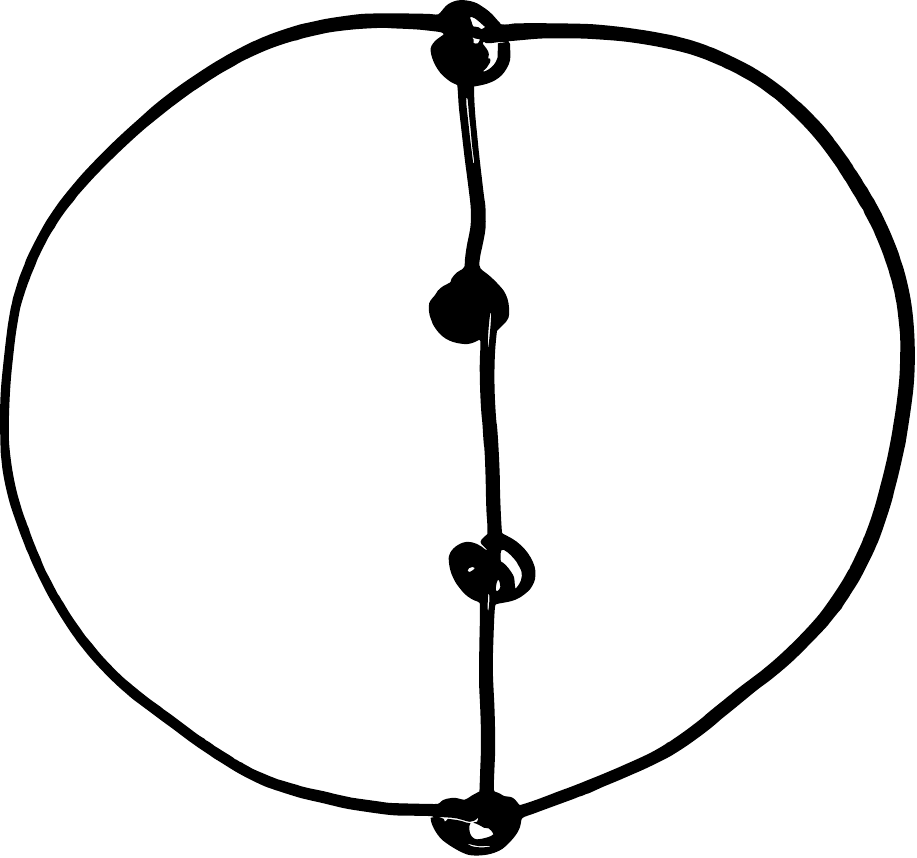}
  \caption{A disk diagram which can not be reduced in a locally
    quadric complex.}
  \figlabel{piec3}
\end{figure}

\begin{figure}
  \centering
  \includegraphics[width=0.4\textwidth]{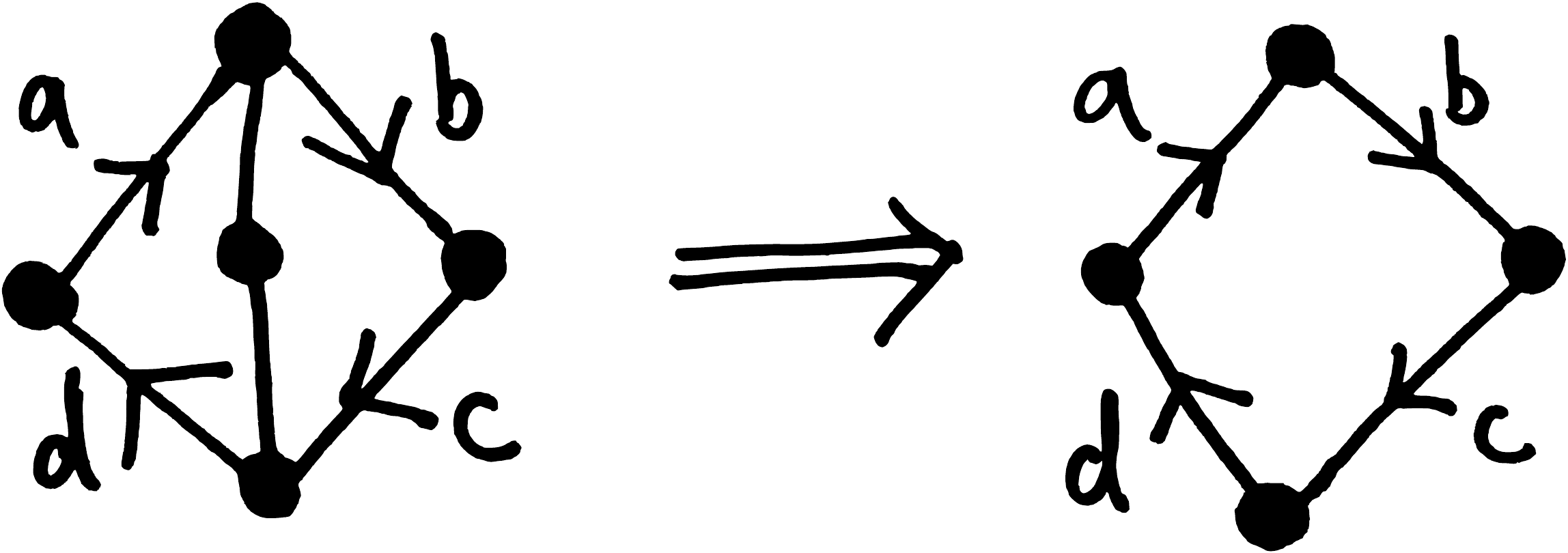}
  \\
  \includegraphics[width=0.8\textwidth]{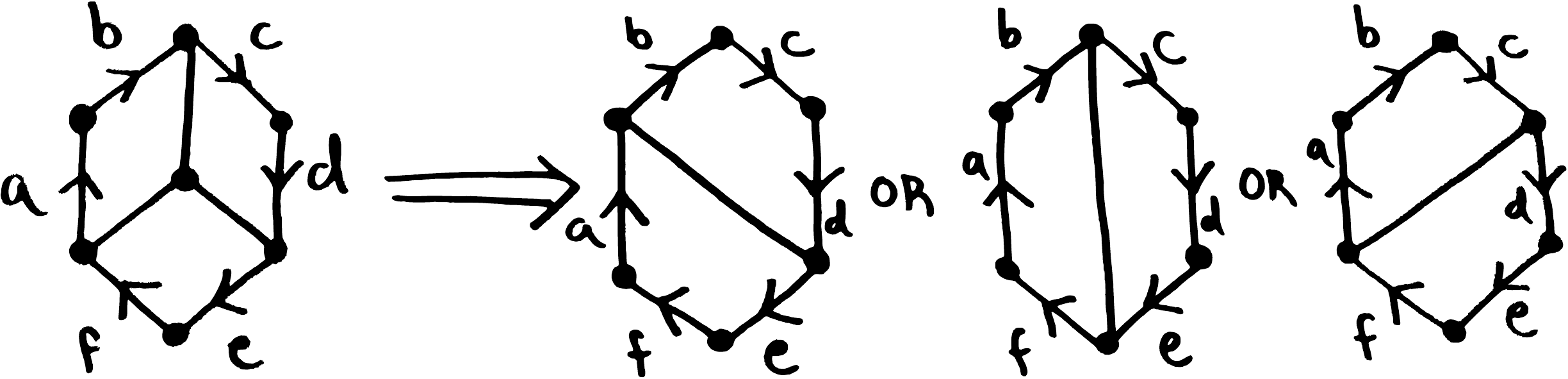}
  \caption{Replacement rules for disk diagrams in quadric complexes.}
  \figlabel{repl}
\end{figure}

\begin{defn}
  \defnlabel{quadric} A \defterm{locally quadric complex} is a locally
  finite square complex $X$ with immersed cells such that no reduced
  disk diagram in $X$ has the form of \Figref{piec3} and any immersed
  disk diagram of a form on the left-hand side of \Figref{repl} has a
  replacement on the right-hand side with the same boundary path.  If,
  in addition, $X$ is simply connected then $X$ is \defterm{quadric}.
  A group $G$ is \defterm{quadric} if it acts properly and cocompactly
  on a quadric complex.
\end{defn}

For a full introduction to quadric complexes and groups see prior work
of the present author \cite{Hoda:2017}.

A square complex $X$ is \defterm{flag} if each square of $X$ is
bounded by an embedded $4$-cycle and each embedded $4$-cycle of $X^1$
bounds a unique square of $X$.

\begin{prop}[{\cite[Proposition~1.18]{Hoda:2017}}]
  \proplabel{quadcycles} Let $X$ be a connected square complex.  Then
  $X$ is quadric if and only if $X$ is flag and every isometrically
  embedded cycle of $X^1$ has length $4$.
\end{prop}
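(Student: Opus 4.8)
The plan is to prove each implication with disk diagrams and a combinatorial Gauss--Bonnet argument, in the spirit of the nonpositive-curvature characterizations of cube and systolic complexes; this is Proposition~1.18 of \cite{Hoda:2017}, so I only sketch the reasoning.

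For the forward implication, assume $X$ is quadric. Flagness should follow directly from the two defining conditions of a locally quadric complex: a square bounded by a nonembedded $4$-cycle, or two distinct squares glued along a common boundary $4$-cycle, would produce a reduced disk diagram of the forbidden form of \Figref{piec3}, or one admitting no replacement from \Figref{repl}. For the cycle condition, let $C$ be an isometrically embedded cycle. Using that $X$ is simply connected, fill $C$ with a disk diagram $D \to X$ of minimal area and reduce it. The prohibition of \Figref{piec3} and the replacement rules of \Figref{repl} are designed precisely so that every interior vertex of a reduced diagram has nonpositive combinatorial curvature, these being exactly the configurations that would otherwise create positive interior curvature. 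The isometric embedding of $C$ then forbids the boundary shortcuts that would be needed to compensate, and combinatorial Gauss--Bonnet, $\sum_v \kappa(v) = 2\pi\chi(D) = 2\pi$, leaves only the degenerate case in which $D$ is a single square; hence $|C| = 4$.

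For the reverse implication, assume $X$ is flag and every isometrically embedded cycle has length $4$. First I would show that $X$ is simply connected. Let $C$ be a shortest essential cycle. If some pair of vertices of $C$ admitted a strict shortcut $\gamma$ in $X^1$, then writing $C$ as the concatenation of its two arcs between these vertices and replacing either arc by $\gamma$ produces two loops, each strictly shorter than $C$, whose product is homotopic to $C$; since $C$ is essential one of them is essential, contradicting minimality. So $C$ is isometrically embedded, whence $|C| = 4$ by hypothesis, and flagness fills $C$ with a square, making $C$ null-homotopic---a contradiction. Thus $X$ is simply connected. It then remains to verify local quadricity, for which I would argue that a reduced disk diagram exhibiting the configuration of \Figref{piec3}, or resisting every replacement of \Figref{repl}, would carry an isometrically embedded cycle of length greater than $4$---obtained from a geodesic boundary subpath of a suitable minimal subdiagram---contradicting the hypothesis, while flagness rules out the remaining degenerate pictures.

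The main obstacle is the boundary curvature bookkeeping in the forward implication: making precise why, for a \emph{reduced} minimal-area diagram filling an \emph{isometric} cycle, the defining conditions force nonpositive interior curvature while the isometry reduces the positive boundary contributions down to the single-square case. Matching the combinatorial replacement rules of \Figref{repl} against the possible local links is exactly the nontrivial passage between the disk-diagram definition and the metric condition, and it is where most of the work is concentrated.
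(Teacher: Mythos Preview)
The paper does not prove this proposition; it is quoted verbatim from \cite{Hoda:2017} and used as a black box, so there is no argument in the present paper to compare your sketch against.

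Your outline is nonetheless in the expected spirit. The forward direction via minimal-area disk diagrams and combinatorial Gauss--Bonnet is the standard route and matches the paper's informal description of quadric complexes as those whose minimal-area disk diagrams are $\CAT(0)$ square complexes; the obstacle you single out (boundary-curvature bookkeeping for an isometric boundary cycle) is exactly where the work sits. For the reverse direction, the shortest-essential-cycle argument for simple connectivity is fine (lengths are positive integers, so a minimum exists even without finiteness assumptions). Two points deserve more care, however. First, the definition of \emph{quadric} in this paper includes local finiteness, which does not obviously follow from flagness plus the isometric-cycle condition; you should either note that it is an ambient hypothesis in \cite{Hoda:2017} or explain why it is not needed. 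Second, your claim that a forbidden or irreplaceable subdiagram ``would carry an isometrically embedded cycle of length greater than $4$'' is the substantive step and is currently only asserted: the link between the local replacement rules and the global metric condition passes through the characterization of hereditary modular graphs (cf.\ \Thmref{bbhm}), and making that passage explicit is what turns the sketch into a proof.
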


It follows from \Propref{quadcycles} that a quadric complex $X$ is the
$2$-skeleton of the flag bisimplicial completion $\bs(X^1)$.  See
\Defnref{bscomp}.

\begin{thm}[{Bandelt \cite[Theorem~1]{Bandelt:1988}}]
  \thmlabel{bbhm} A graph is hereditary modular if and only if it is
  connected and every isometrically embedded cycle has length $4$.
\end{thm}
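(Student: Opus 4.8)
The plan is to prove both directions, the ``only if'' being routine and the ``if'' carrying essentially all the weight. Recall that a connected graph is \emph{modular} if every triple of vertices $u,v,w$ has a \emph{median}, i.e.\ a vertex lying on some geodesic between each of the three pairs, and that a graph is \emph{hereditary modular} if each of its isometric subgraphs is modular; I write $I(x,y)$ for the set of vertices on geodesics between $x$ and $y$.

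For the forward implication, suppose $G$ is hereditary modular. Taking $G$ itself as an isometric subgraph shows $G$ is modular, hence connected. Any isometrically embedded cycle is an isometric subgraph and so must be modular; but inspecting the cycle $C_n$ one checks that it is modular only when $n = 4$, since for $n = 3$ and for $n \ge 5$ there are three roughly equally spaced vertices whose median set is empty. Hence every isometric cycle of $G$ has length $4$.

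For the converse I would first observe that the hypothesis passes to isometric subgraphs: if $H$ is isometric in $G$ then $H$ is connected and every isometric cycle of $H$ is isometric in $G$, hence of length $4$. So it suffices to prove the single statement that a connected graph with all isometric cycles of length $4$ is modular, and then apply it to every isometric subgraph. Two reductions follow. First, $G$ is bipartite, because a shortest odd cycle is always isometric and its length is odd, hence not $4$. Second, to find a median of a triple $u,v,w$ I would induct on $\sigma = d(u,v) + d(v,w) + d(w,u)$: if some vertex, say $v$, has a neighbour $v^{*}$ with $d(v^{*},u) = d(v,u) - 1$ and $d(v^{*},w) = d(v,w) - 1$, then replacing $v$ by $v^{*}$ drops $\sigma$ by two and any median of $(u,v^{*},w)$ is a median of $(u,v,w)$, so induction applies. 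The only remaining case is when no vertex of the triple has such a doubly-decreasing neighbour, and handling it amounts to verifying the \emph{quadrangle condition}: whenever $v,w$ share a neighbour $t$ with $d(u,t) = d(u,v) + 1 = d(u,w) + 1$ and $d(v,w) = 2$, they also share a neighbour $s$ with $d(u,s) = d(u,v) - 1$.

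The main obstacle is establishing this quadrangle condition from the isometric-cycle hypothesis, and this is where the geometry enters. I would argue by minimal counterexample in $k = d(u,v)$. The case $k = 1$ holds with $s = u$, so assume $k \ge 2$ and choose neighbours $v',w'$ of $v,w$ at distance $k-1$ from $u$; the subcases $v' = w'$, $v \sim w'$, and $w \sim v'$ each immediately produce $s$, leaving the case $d(v,w') = d(w,v') = 3$. Gluing a geodesic $u \to v$ and a geodesic $u \to w$ (chosen through $v'$ and $w'$) to the length-two path $v\,t\,w$, and letting $u'$ be their last common vertex at distance $j$ from $u$, yields a cycle through $t$ of length $2(k-j) + 2$. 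If $k - j = 1$ then $u'$ is the desired common neighbour $s$; otherwise the cycle has length at least $6$, and the delicate, technical heart of the argument is to verify---using the minimality of $k$ and bipartiteness to rule out every chord---that this cycle is isometric, contradicting the hypothesis. Once the quadrangle condition is in hand, $G$ is modular, and since the hypothesis is inherited by isometric subgraphs, $G$ is hereditary modular.
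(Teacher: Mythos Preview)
The paper does not supply a proof of this theorem: it is quoted verbatim as Bandelt's result \cite[Theorem~1]{Bandelt:1988} and used as a black box in the proof of \Thmref{quadcont}. There is therefore no in-paper argument against which to compare yours.

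Your outline is the standard route in metric graph theory and is essentially the one Bandelt takes. Two remarks. First, the logical shape of your converse is slightly tangled: the clean statement is that a connected bipartite graph is modular iff it satisfies the quadrangle condition, so the argument should be (i) prove bipartiteness, (ii) prove the quadrangle condition, (iii) invoke (or prove) that implication; your phrasing ``handling [the case with no doubly-decreasing neighbour] amounts to verifying the quadrangle condition'' inverts the dependency, since the quadrangle condition is precisely what \emph{produces} the doubly-decreasing neighbour in the induction. Second, the step you flag as ``delicate''---showing that a minimal failure of the quadrangle condition yields an isometric cycle of length~$\ge 6$---is genuinely where all the content lives, and your sketch does not yet establish it: the cycle you build from two geodesics and the path $v\,t\,w$ need not itself be isometric, and one typically has to pass to a shortest cycle within the region bounded by those geodesics and then argue chord-by-chord using bipartiteness and the minimality of~$k$. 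That refinement is exactly what Bandelt carries out; as written, your argument is a correct plan with the key lemma left as an exercise.
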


A graph is \defterm{modular} if for every triple of vertices $u,v,w$
there exists a vertex $x$ which lies on some geodesic between each
pair of vertices in the triple.  A graph is \defterm{hereditary
  modular} if each of its isometrically embedded subgraphs is modular.

The \defterm{metric ball} of radius $r \in \N$ centered at a vertex
$v$ of a graph (bisimplicial complex) is the induced (full) subgraph
(subcomplex) on the set of vertices of distance at most $r$ to $v$ (in
the $1$-skeleton).

\begin{rmk}
  \rmklabel{hmballs} Let $\Gamma$ be a modular graph.  Then the metric
  balls of $\Gamma$ are isometrically embedded.  In particular, if
  $\Gamma$ is hereditary modular then its metric balls are hereditary
  modular.
\end{rmk}

\begin{thm}[{Bandelt \cite[Theorem~2]{Bandelt:1988}}]
  \thmlabel{hmdism} Let $\Gamma$ be a finite nonempty hereditary
  modular graph.  Then $\Gamma$ is bi-dismantlable.
\end{thm}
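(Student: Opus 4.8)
The plan is to induct on the number of vertices of $\Gamma$. If $\Gamma$ is a nonempty connected complete bipartite graph then it is already the terminus of a bi-dismantling sequence of length zero, so suppose it is not. I would reduce the whole theorem to the following claim: a finite hereditary modular graph that is not complete bipartite contains a dominated vertex $v$ such that the induced subgraph $\Gamma' = \Gamma - v$ is again connected and hereditary modular. Granting this, the inductive hypothesis yields a bi-dismantling of $\Gamma'$, and prepending the deletion of $v$ gives one for $\Gamma$.

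First I would observe that domination already supplies the two required properties of $\Gamma'$, so the only real content is producing the dominated vertex. Indeed, suppose $v$ is dominated by $w$, so that $S_1(v) \subseteq S_1(w)$ with $w \neq v$. Any geodesic of $\Gamma$ passing through $v$ has the form $\cdots a - v - b \cdots$ with $a,b \in S_1(v) \subseteq S_1(w)$, and replacing the subpath $a - v - b$ by $a - w - b$ produces a walk of the same length avoiding $v$. Hence $\Gamma'$ is connected and is isometrically embedded in $\Gamma$. Consequently every isometrically embedded cycle of $\Gamma'$ is isometrically embedded in $\Gamma$, hence of length $4$ by \Thmref{bbhm}; applying \Thmref{bbhm} again shows that $\Gamma'$ is hereditary modular.

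It remains to find a dominated vertex when $\Gamma$ is not complete bipartite. Fix a basepoint $u$. Since $\Gamma$ is not a star, $u$ has eccentricity $r \geq 2$; let $v$ satisfy $d(u,v) = r$. As $\Gamma$ is bipartite and $v$ is farthest from $u$, every neighbour of $v$ lies in the sphere $S_{r-1}(u)$. If $v$ has a unique neighbour $p$, then $p$ lies on a geodesic from $u$ and so has a neighbour $w \in S_{r-2}(u)$ with $w \neq v$, whence $S_1(v) = \{p\} \subseteq S_1(w)$ and $v$ is dominated. Otherwise $\lvert S_1(v)\rvert \geq 2$, and I would show that $S_1(v)$ has a common neighbour $w \in S_{r-2}(u)$ distinct from $v$; such a $w$ dominates $v$. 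For any $p,q \in S_1(v)$ we have $d(p,q) = 2$, and applying modularity to the triple $u,p,q$ produces a vertex on the geodesics between each pair: the one lying on $[p,q]$ is a common neighbour $m_{pq}$ of $p$ and $q$ that also lies on geodesics to $u$, hence $m_{pq} \in S_{r-2}(u)$. This yields pairwise common neighbours below $v$.

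The main obstacle is upgrading these pairwise common neighbours to a single common neighbour of all of $S_1(v)$ --- a Helly-type statement that I expect to be the crux of the proof. Here I would exploit the isometric $4$-cycle characterization of \Thmref{bbhm}. For three neighbours $p_1,p_2,p_3$ of $v$, if no single vertex were adjacent to all three then the pairwise medians $m_{12},m_{13},m_{23} \in S_{r-2}(u)$ would be pairwise distinct and, together with the $p_i$, would form the isometric $6$-cycle $m_{12} - p_1 - m_{13} - p_3 - m_{23} - p_2 - m_{12}$ (the three antipodal pairs being at distance $3$ precisely because no common neighbour exists), contradicting \Thmref{bbhm}; hence every three neighbours of $v$ have a common neighbour. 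I would then promote this $3$-wise intersection property to a global common neighbour by induction on $\lvert S_1(v)\rvert$, iterating the same isometric-cycle mechanism and using \Rmkref{hmballs} to confine the analysis to the hereditary modular ball $B_{r-1}(u)$ that contains all the relevant vertices. Verifying this aggregation step --- ruling out the larger isometric cycles that any obstruction to a global common neighbour would create --- is where the argument will require the most care.
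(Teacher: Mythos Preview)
The paper does not prove this statement: it is quoted from Bandelt with a bare citation and no argument, so there is no in-paper proof to compare your proposal against.

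Your outline is essentially a correct proof.  The reduction to finding a dominated vertex, the verification that deleting a dominated vertex leaves an isometrically embedded (hence, by \Thmref{bbhm}, hereditary modular) subgraph, and the isometric $6$-cycle contradiction for three neighbours of $v$ are all valid.  For the aggregation step you flag as the crux, you do not need longer cycles: the same $6$-cycle argument works uniformly.  Inducting on $k\ge 2$, suppose every $k$ of $p_1,\ldots,p_{k+1}\in S_1(v)$ have a common neighbour in $S_{r-2}(u)$ but the full set does not.  For each $j$ choose $w_j\in S_{r-2}(u)$ adjacent to all $p_i$ with $i\neq j$; then $w_j\not\sim p_j$ and the $w_j$ are pairwise distinct, since $w_i=w_j$ for $i\neq j$ would give a common neighbour of all $k+1$.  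Taking the indices $1,2,3$, the cycle $p_1\, w_2\, p_3\, w_1\, p_2\, w_3\, p_1$ is isometric: each antipodal pair $(p_j,w_j)$ is at distance exactly $3$ because $w_j\not\sim p_j$ while $p_j\, v\, p_i\, w_j$ has length $3$; each pair $(p_i,p_j)$ is at distance $2$ via $v$; and each pair $(w_i,w_j)$ is at distance $2$ via the remaining $p_l$.  This contradicts \Thmref{bbhm} and completes the induction.  One small imprecision to fix: in your triple argument the hypothesis should read ``no vertex \emph{other than $v$} is adjacent to all three $p_i$'' (equivalently, none in $S_{r-2}(u)$ is), since $v$ itself always is.
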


\begin{thm}
  \thmlabel{quadcont} Let $X$ be a nonempty quadric complex.  Then the
  flag bisimplicial completion $\bs(X^1)$ is contractible.
\end{thm}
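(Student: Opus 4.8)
The plan is to reduce the statement about the infinite complex $\bs(X^1)$ to the finite collapsibility result \Thmref{dismclps} by exhausting $X^1$ with metric balls. Since $X$ is a quadric complex, its $1$-skeleton $X^1$ is connected and, by \Propref{quadcycles} and \Thmref{bbhm}, is a hereditary modular graph. First I would fix a basepoint vertex $v \in X^0$ and consider the metric balls $B_r(v)$ of radius $r$ in $X^1$. By \Rmkref{hmballs}, each $B_r(v)$ is itself a finite hereditary modular graph, hence by \Thmref{hmdism} is bi-dismantlable. The key observation is that $\bs(B_r(v))$, the flag bisimplicial completion of this ball, is exactly the full subcomplex of $\bs(X^1)$ induced on the vertex set of $B_r(v)$: the flag condition means a bisimplex is present iff its $1$-skeleton is a spanworthy complete bipartite subgraph, and such subgraphs of $X^1$ spanned entirely on vertices of $B_r(v)$ are precisely those of $B_r(v)$ itself.

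Next I would apply \Thmref{dismclps} to each $\bs(B_r(v))$. Since $B_r(v)$ is finite, bipartite (being an induced subgraph of the bipartite $X^1$) and bi-dismantlable, and since $\bs(B_r(v))$ is a finite flag bisimplicial complex with this $1$-skeleton, \Thmref{dismclps} gives that each $\bs(B_r(v))$ is collapsible, hence in particular contractible. The complex $\bs(X^1)$ is then the increasing union (colimit) of the subcomplexes $\bs(B_r(v))$ as $r \to \infty$, since every bisimplex of $\bs(X^1)$ has finitely many vertices, all of which lie in some $B_r(v)$ once $r$ exceeds their distances to $v$.

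Finally I would conclude contractibility of the colimit. Each inclusion $\bs(B_r(v)) \hookrightarrow \bs(B_{r+1}(v))$ is a cofibration of CW complexes between contractible spaces, so $\bs(X^1) = \varinjlim_r \bs(B_r(v))$ is contractible; concretely one checks that any continuous map of a sphere into $\bs(X^1)$ has compact image contained in some $\bs(B_r(v))$, where it is nullhomotopic, so all homotopy groups of $\bs(X^1)$ vanish, and Whitehead's theorem then gives contractibility of the CW complex.

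The main obstacle I anticipate is verifying carefully that $\bs(B_r(v))$ really is the full subcomplex of $\bs(X^1)$ on the vertices of $B_r(v)$, rather than some larger or smaller complex. This requires knowing that the spanworthy complete bipartite subgraphs of $X^1$ lying inside $B_r(v)$ coincide with those of $B_r(v)$ as an abstract graph — which hinges on $B_r(v)$ being an \emph{isometrically embedded} (induced) subgraph, guaranteed by \Rmkref{hmballs}. Everything else is formal: the colimit argument and the reduction to \Thmref{dismclps} are routine once this identification is in place.
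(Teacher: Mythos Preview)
Your proposal is correct and follows essentially the same approach as the paper: exhaust $\bs(X^1)$ by the flag bisimplicial completions of metric balls in $X^1$, use \Propref{quadcycles}, \Thmref{bbhm}, \Rmkref{hmballs} and \Thmref{hmdism} to see these balls are bi-dismantlable, apply \Thmref{dismclps} to get collapsibility, and pass to the colimit. The only point you leave implicit that the paper states explicitly is that the balls $B_r(v)$ are \emph{finite}, which follows from the local finiteness built into the definition of a quadric complex.
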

\begin{proof}
  The metric balls of $X$ are quadric by \Propref{quadcycles},
  \Thmref{bbhm} and \Rmkref{hmballs}.  These balls are finite since
  quadric complexes are locally finite.  Hence balls in $X$ are
  collapsible by \Propref{quadcycles}, \Thmref{bbhm}, \Thmref{hmdism}
  and \Thmref{dismclps}.  The balls of $X$ centered at a fixed vertex
  give an ascending exhaustion of $X$ by contractible subcomplex and
  so $X$ is contractible.
\end{proof}

\subsection{A \texorpdfstring{$K(G,1)$}{K(G,1)} for Torsion-Free
  Quadric Groups}

Let $X$ be a locally quadric complex.  Then the universal cover
$\univcov X$ is quadric and so $\pi_1 X$ is quadric.  Let
$\square_{m,n}$ denote the $2$-skeleton of the bisimplex $\bs^{m,n}$.
Let $\square_{m,n} \to X$ be an immersion with $m,n \ge 2$.  Since
$\square_{m,n}$ is simply connected it lifts to $\univcov X$.  Since
quadric complexes do not contain loops or bigons \cite{Hoda:2017},
every lift $\square_{m,n} \to \univcov X$ is an embedding.  Every
torsion-free quadric group is the fundamental group of a compact
locally quadric complex.  However, a compact locally quadric complex
may have a fundamental group with torsion.  The following theorem
allows to understand when this is the case.

\begin{thm}[Invariant Biclique Theorem \cite{Hoda:2017}]
  Let $F$ be a finite group acting on a quadric complex $\univcov X$.
  Then $F$ stabilizes a nonempty connected complete bipartite subgraph
  of $\univcov X$.
\end{thm}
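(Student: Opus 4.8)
The plan is to imitate the Bruhat--Tits fixed point argument, replacing the circumcenter of a bounded orbit with a canonical $F$-invariant biclique. First I would reduce to a finite setting. Since $F$ is finite, the orbit $F \cdot v$ of any vertex $v$ of $\univcov X$ is finite, hence of bounded diameter. Let $Y$ be the full subcomplex of $\univcov X$ on the combinatorial convex hull of $F \cdot v$ in the $1$-skeleton. Because the hull is a bounded subset of a locally finite complex it is finite, and it is manifestly $F$-invariant. Since convex subgraphs are isometrically embedded, and isometrically embedded subgraphs of hereditary modular graphs are again hereditary modular (\Rmkref{hmballs}, \Thmref{bbhm}), the graph $Y^1$ is a finite nonempty hereditary modular graph; thus $Y$ is quadric and $Y^1$ is bi-dismantlable by \Thmref{hmdism}. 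This packages the problem into finding an invariant biclique inside a finite $F$-invariant quadric complex.

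With a finite invariant target in hand, the core of the argument is a fixed-point statement. The natural candidate for a canonical invariant object is the \emph{circumcenter set}, namely the set $C$ of vertices $x \in Y$ minimizing the eccentricity $e(x) = \max_{g \in F} d(x, g v)$ taken over the orbit. Since both the orbit and the path metric are $F$-invariant, $C$ is a nonempty $F$-invariant set. I would then aim to show that $C$ is itself a biclique: that it has diameter at most $2$ and that its two distance classes realize the bipartition of a connected complete bipartite subgraph of $\univcov X$. Granting this, $F$ stabilizes $C$ setwise, possibly interchanging its two sides, which is exactly the desired conclusion.

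The hard part will be this final structural step, showing that the minimal-eccentricity set is \emph{precisely} a biclique rather than some larger subcomplex. This is the combinatorial analogue of the fact that the circumcenter of a finite orbit in a $\CAT(0)$ cube complex is supported on a single cube, and it is here that the quadric curvature conditions are essential. Concretely, I would argue that two circumcenters at distance $2$ must be opposite corners of a square: the modularity of $Y^1$ supplies a median vertex on geodesics between them, and the replacement rules of \Figref{repl} together with the absence of the forbidden piece of \Figref{piec3} rule out any $K_{2,3}$-type configuration that would either lower the common eccentricity or obstruct the square from closing up. Running this over all pairs in $C$ would force $C$ to be complete bipartite. Should the circumcenter set fail to be a biclique on the nose, the fallback is to pass to its convex hull---again a finite bi-dismantlable quadric subcomplex---and perform an \emph{equivariant} bi-dismantling in the spirit of \Thmref{dismclps}, deleting canonical $F$-orbits of dominated vertices until only an invariant biclique survives. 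The obstacle in that approach is that a single dominated vertex need not be deletable $F$-equivariantly, so one must remove whole orbits simultaneously and verify that bi-dismantlability, and hence the hereditary modular structure, is preserved at each stage.
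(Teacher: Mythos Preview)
The paper does not prove this theorem. The Invariant Biclique Theorem is quoted from \cite{Hoda:2017} and used as a black box in the proof of \Corref{immtorsion}; no argument for it appears anywhere in the present manuscript. There is therefore nothing here to compare your proposal against.

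As for the proposal itself, it is a strategy sketch rather than a proof, and you are candid about this: you write that you ``would aim to show'' that the minimal-eccentricity set $C$ is a biclique, that this is ``the hard part,'' and that your fallback equivariant bi-dismantling has an explicit ``obstacle.'' The overall architecture---reduce to a finite $F$-invariant hereditary modular subgraph, then locate a canonical invariant biclique by a circumcenter or equivariant-dismantling argument---is the right shape and parallels known fixed-point theorems for systolic and $\CAT(0)$ cube complexes. But neither branch is closed off. For the circumcenter branch, nothing you have written forces $C$ to have diameter at most $2$, let alone to be complete bipartite; the vague appeal to ``the replacement rules of \Figref{repl}'' and ``the absence of the forbidden piece'' is not an argument, and in modular graphs minimal-eccentricity sets can a priori be larger than a single biclique. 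For the dismantling branch, you correctly identify the issue: deleting an entire $F$-orbit of dominated vertices need not leave a bi-dismantlable (or even hereditary modular) graph, and you give no mechanism to overcome this. If you want an actual proof you should consult \cite{Hoda:2017} directly; the argument there is what you would need to reproduce or replace.
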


To state the following corollary we need a definition.  Let
$\Aut(\square_{m,n})$ be the set of automorphisms of $\square_{m,n}$.
Note that $\Aut(\square_{m,n})$ acts on the set of immersions
$\{ \square_{m,n} \to X \}$.  For a given immersion
$\square_{m,n} \to X$, we define \defterm{$\Aut(\square_{m,n} \to X)$}
as the stabilizer of $\square_{m,n} \to X$ in $\Aut(\square_{m,n})$.

\begin{cor}
  \corlabel{immtorsion} Let $X$ be a compact locally quadric complex.
  Then $\pi_1(X)$ has torsion if and only if
  $\Aut(\square_{m,n} \to X)$ is nontrivial for some immersion
  $\square_{m,n} \to X$ with $m,n \ge 1$.
\end{cor}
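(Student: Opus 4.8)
The plan is to pass between torsion in $\pi_1(X)$ and automorphisms of immersed bisimplex $2$-skeleta by exploiting the free, cellular deck action of $\pi_1(X)$ on the quadric complex $\univcov X$ together with the Invariant Biclique Theorem. Throughout I will use three facts. First, the deck action is free, so a nontrivial deck transformation has no fixed points anywhere in $\univcov X$. Second, each $\square_{m,n}$ is simply connected, so it lifts along $\univcov X \to X$ and every lift is an embedding. Third, a graph automorphism of a spanworthy complete bipartite subgraph $K$ of $\univcov X$ extends to a cellular automorphism of the $\square_{m,n}$ it spans, since the bisimplicial structure is determined by its $1$-skeleton. Recall also that $\univcov X$ is quadric and hence has bipartite $1$-skeleton, so any connected subgraph inherits its bipartition from the global one.

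For the forward implication, let $g \in \pi_1(X)$ be a nontrivial torsion element and set $F = \langle g \rangle$, a nontrivial finite group acting on $\univcov X$. By the Invariant Biclique Theorem, $F$ stabilizes a nonempty connected complete bipartite subgraph $K$ of $\univcov X$ with bipartition $A \sqcup B$. The crux is to show $|A|, |B| \ge 2$, so that $K$ is spanworthy and spans some $\square_{m,n}$ with $m = |A|-1 \ge 1$ and $n = |B|-1 \ge 1$. Here freeness does the work: a single invariant vertex would be fixed by all of $F$, which is impossible; if $K$ were a star $K_{1,q}$ with $q \ge 2$ then every graph automorphism of $K$ would fix its center, again impossible; and if $K$ were a single edge then some nontrivial $f \in F$ would have to reverse it and hence fix its midpoint, once more impossible. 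Since $K$ is connected, these are the only degenerate possibilities, so $|A|, |B| \ge 2$. Let $\Sigma \subset \univcov X$ be the embedded $\square_{m,n}$ spanned by $K$; postcomposing its embedding with $\univcov X \to X$ gives an immersion $\iota \colon \square_{m,n} \to X$. Transporting the restriction to $\Sigma$ of a nontrivial $f \in F$ through this embedding yields $\phi \in \Aut(\square_{m,n})$ with $\iota \circ \phi = \iota$, and $\phi \neq \mathrm{id}$ since otherwise $f$ would fix $\Sigma$ pointwise, contradicting freeness. Thus $\Aut(\square_{m,n} \to X)$ is nontrivial.

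For the reverse implication, suppose $\phi \in \Aut(\square_{m,n} \to X)$ is nontrivial, of finite order $k > 1$, for an immersion $\iota \colon \square_{m,n} \to X$. Lift $\iota$ to an embedding $\widetilde\iota \colon \square_{m,n} \to \univcov X$. Since $\iota \circ \phi = \iota$, the map $\widetilde\iota \circ \phi$ is also a lift of $\iota$, so there is a unique deck transformation $g \in \pi_1(X)$ with $g \circ \widetilde\iota = \widetilde\iota \circ \phi$. Iterating gives $g^k \circ \widetilde\iota = \widetilde\iota \circ \phi^k = \widetilde\iota$, so $g^k$ fixes the image of $\widetilde\iota$ pointwise and hence $g^k = 1$ by freeness. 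Finally $g \neq 1$: if $g = 1$ then $\widetilde\iota = \widetilde\iota \circ \phi$, and injectivity of $\widetilde\iota$ forces $\phi = \mathrm{id}$, contrary to assumption. Hence $g$ is a nontrivial torsion element of $\pi_1(X)$.

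I expect the main obstacle to be the degenerate-biclique analysis in the forward direction: the Invariant Biclique Theorem only guarantees \emph{some} invariant biclique, and one must rule out the cases where it is too small to span a positive-dimensional bisimplex. The resolution rests entirely on freeness of the deck action, and in particular on the observations that a nontrivial cellular deck transformation can neither fix a vertex nor reverse an edge, which together force both sides of the bipartition to have size at least two.
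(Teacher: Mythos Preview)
Your proof is correct and follows essentially the same strategy as the paper: in both directions you pass through a lift to $\univcov X$, invoke the Invariant Biclique Theorem for the forward implication, and use freeness of the deck action to rule out degenerate bicliques and to pin down the order of the resulting deck transformation. Your write-up is more explicit than the paper's---in particular you spell out the case analysis behind the terse ``Since the action is free, we have $m,n \ge 1$'' and you unpack the covering-space argument showing the deck transformation has finite order---but the underlying ideas are identical.
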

\begin{proof}
  Suppose $g \in \pi_1(X) \setminus \{1 \}$ has finite order.  Then
  $\subgp{g}$ stabilizes a nonempty connected complete bipartite
  subgraph $K_{m+1,n+1}$ of $\univcov X^1$.  Since the action is free,
  we have $m,n \ge 1$.  The full subcomplex induced by this
  $K_{m+1,n+1}$ is a $\square_{m,n}$.  Restricting the covering map to
  $\square_{m,n}$ we have an immersion $\square_{m,n} \to X$.
  Restricting the action of $g$ to $\square_{m,n}$ we obtain a
  nontrivial automorphism of $\square_{m,n} \to X$.

  Now, suppose there is a nontrivial automorphism
  $\phi \colon \square_{m,n} \to \square_{m,n}$ of an immersion
  $\square_{m,n} \to X$.  Let $f \colon \square_{m,n} \to \univcov X$
  be a lift of this immersion and identify $\square_{m,n}$ with its
  image under $f$.  Then $\phi$ extends to a nontrivial deck
  transformation which must have finite order.
\end{proof}

Let $X$ be a compact locally quadric complex.  We will construct a
compact complex $X^{+}$ having $X$ as its $2$-skeleton such that
$X^{+}$ is a $K\bigl(\pi_1(X),1\bigr)$ if $\pi_1(X)$ is torsion-free.
The construction is by induction on dimension.  The $2$-skeleton of
$X^{+}$ is $X$.  Now suppose we have already constructed the
$(k-1)$-skeleton $(X^{+})^{k-1}$ of $X^{+}$.  To obtain the
$k$-skeleton of $X^{+}$ perform the following operation.  Along each
immersion $\bd\bs^{m,n} \to (X^{+})^{k-1}$ with $\dim(\bs^{m,n}) = k$
glue in a copy of $\bs^{m,n}$.  For the purposes of this operation,
two immersions which are isomorphic over $(X^{+})^{k-1}$ are
considered identical and so result in only a single gluing.  Since $X$
is compact there is a bound on the size of a connected complete
bipartite graph which can immerse in $X$.  Hence $X^{+}$ is compact.

\begin{lem}
  \lemlabel{xplbsc} Let $X$ be a compact locally quadric complex with
  $\pi_1(X)$ torsion-free.  Then the universal cover $\univcov{X^{+}}$
  is isomorphic to the bisimplicial completion
  $\bs\bigl({\univcov X}^1\bigr)$.
\end{lem}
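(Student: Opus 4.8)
The plan is to construct an isomorphism $\bs(\univcov X^1) \isomor \univcov{X^+}$ by induction on skeleta, with the base case being the agreement of the two $2$-skeleta. Since attaching cells of dimension at least $3$ does not change the fundamental group, we have $\pi_1(X^+) \isomor \pi_1(X)$, and hence the $2$-skeleton of $\univcov{X^+}$ is the universal cover of the $2$-skeleton $(X^+)^2 = X$, which is $\univcov X$. By the discussion following \Propref{quadcycles}, the $2$-skeleton of $\bs(\univcov X^1)$ is also $\univcov X$, and the two $1$-skeleta are both $\univcov X^1$. This identifies the complexes through dimension $2$, so we may begin the induction there.

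For the inductive step, fix $k \ge 3$ and assume the $(k-1)$-skeleta have been identified with a single complex $Y^{k-1}$. By \Defnref{bscomp}, the $k$-cells of $\bs(\univcov X^1)$ correspond bijectively to the subcomplexes of $Y^{k-1}$ isomorphic to some $\bd\bs^{m,n}$ of dimension $k$, with exactly one cell glued along each such boundary sphere. I would show that the $k$-cells of $\univcov{X^+}$ admit the same description. Let $p \colon \univcov{X^+} \to X^+$ be the covering map. Each $k$-cell of $\univcov{X^+}$ projects to a $k$-cell of $X^+$, namely a bisimplex $\bs^{m,n}$ ($m,n \ge 1$, $m+n = k$) attached along an immersion $\bd\bs^{m,n} \to (X^+)^{k-1}$; because $\bd\bs^{m,n}$ is a sphere of dimension $k-1 \ge 2$ and so is simply connected, the attaching map of the lifted cell is an embedding, giving an embedded copy $S \isomor \bd\bs^{m,n}$ in $Y^{k-1}$. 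Conversely, existence of a filling cell for any such embedded $S$ follows by projecting $S$ to its immersion $\bar f$ in $(X^+)^{k-1}$, which was filled during the construction of $X^+$, and lifting the characteristic map of that filling cell so that its boundary is exactly $S$; simple connectivity of $\bs^{m,n}$ guarantees such a lift.

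The crux, and the step I expect to be the main obstacle, is \emph{uniqueness}: that each embedded $S \isomor \bd\bs^{m,n}$ bounds exactly one $k$-cell of $\univcov{X^+}$, so that the universal cover is flag in the same sense as $\bs(\univcov X^1)$. This is where torsion-freeness is essential. Suppose two distinct $k$-cells $\tilde e_1$ and $\tilde e_2$ were attached along the same $S$. Both project to immersions carrying $S$ to $\bar f = p|_S$, hence project to a common cell $e$ of $X^+$ by the single-gluing convention in the construction of $X^+$; consequently a nontrivial deck transformation $g$ satisfies $g\tilde e_1 = \tilde e_2$. Since both cells are attached along $S$, we obtain $gS = S$, so $g$ fixes setwise the $2$-skeleton $\square_{m,n}$, which equals $(\bs^{m,n})^2 = (\bd\bs^{m,n})^2$ since $k \ge 3$. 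As the deck action on the universal cover is free, $g$ restricts to a \emph{nontrivial} automorphism of the immersion $\square_{m,n} \to X$ given by $p|_{\square_{m,n}}$, so $\Aut(\square_{m,n} \to X)$ is nontrivial. By \Corref{immtorsion} this forces $\pi_1(X)$ to have torsion, a contradiction.

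This uniqueness establishes a bijection between the $k$-cells of $\univcov{X^+}$ and those of $\bs(\univcov X^1)$ in which corresponding cells have matching attaching maps over $Y^{k-1}$. Since a regular CW complex is determined by its skeleta together with these attaching maps, the isomorphism extends to $k$-skeleta, completing the inductive step and hence the proof.
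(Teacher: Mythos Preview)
Your proof is correct and follows essentially the same route as the paper's: induction on skeleta, with the $k$-cells of both complexes indexed by embedded copies of $\bd\bs^{m,n}$ in the common $(k-1)$-skeleton, and uniqueness supplied by \Corref{immtorsion}. Your write-up is in fact more explicit than the paper's in two places: you justify the base case by noting that attaching cells of dimension $\ge 3$ does not change $\pi_1$, and you spell out the deck-transformation argument behind the appeal to \Corref{immtorsion} (the paper simply cites the corollary for the uniqueness step).
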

\begin{proof}
  The proof is by induction on skeleta.  The $2$-skeleta of $X^{+}$
  and $\bs({\univcov X}^1)$ are $X$ and $\univcov X$ so the base case
  holds.  Assume the statement holds for the $(k-1)$-skeleta:
  $\bigl(\univcov{X^{+}}\bigr)^{k-1} \isomor \bs\bigl(\univcov
  X^1\bigr)^{k-1}$.  Each $\bd \bs^{m,n}$ subcomplex,
  $\dim(\bs^{m,n})=k$, of $\bs\bigl({\univcov X}^1\bigr)^{k-1}$
  immerses into $(X^{+})^{k-1}$ under the covering map and so spans a
  $\bs^{m,n}$ in $\bigl(\univcov{X^{+}}\bigr)^k$.  On the other hand,
  each immersion $\bd \bs^{m,n} \to (X^{+})^{k-1}$ with
  $\dim(\bs^{m,n})=k$ lifts to an embedding in
  $\bs\bigl({\univcov X}^1\bigr)^{k-1}$ whose image thus spans a
  unique $\bs^{m,n}$ in $\bs\bigl({\univcov X}^1\bigr)^k$.  So the set
  of boundaries of the $k$-dimensional bisimplices of
  $\bigl(\univcov{X^{+}}\bigr)^k$ and
  $\bs\bigl({\univcov X}^1\bigr)^k$.  No two $k$-dimensional
  bisimplices have the same boundary in
  $\bs\bigl({\univcov X}^1\bigr)^k$.  The same holds for
  $\bigl(\univcov{X^{+}}\bigr)^k$ by \Corref{immtorsion} and so
  $\bigl(\univcov{X^{+}}\bigr)^k \isomor \bs\bigl(\univcov
  X^1\bigr)^k$.
\end{proof}

The main theorem of this section follows immediately from
\Lemref{xplbsc} and \Thmref{quadcont}.

\begin{thm}
  \thmlabel{quadkgone} Let $X$ be a compact locally quadric complex.
  If $\pi_1(X)$ is torsion-free then $X^{+}$ is a compact
  $K(\pi_1(X),1)$.
\end{thm}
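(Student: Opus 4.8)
The plan is to assemble the theorem directly from \Lemref{xplbsc} and \Thmref{quadcont}, together with the standard characterization of aspherical spaces; the statement of the theorem is a short synthesis of results already in hand. First I would record that $X^{+}$ is compact, which was already established during its construction: compactness of $X$ bounds the size of any connected complete bipartite graph that immerses in $X$, hence bounds the dimension and number of bisimplices attached. Next I would identify the fundamental group of $X^{+}$. Because $X$ is the $2$-skeleton of $X^{+}$ and every higher cell is a bisimplex $\bs^{m,n}$ of dimension $k \ge 3$, each attaching map has domain $\bd \bs^{m,n}$, a sphere of dimension $k-1 \ge 2$. Attaching cells along maps whose domains are spheres of dimension at least $2$ does not change the fundamental group, so $\pi_1(X^{+}) \isomor \pi_1(X)$.

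The heart of the argument is to show that the universal cover $\univcov{X^{+}}$ is contractible. By \Lemref{xplbsc}, the hypothesis that $\pi_1(X)$ is torsion-free gives an isomorphism $\univcov{X^{+}} \isomor \bs\bigl(\univcov X^1\bigr)$. Since $X$ is locally quadric, its universal cover $\univcov X$ is quadric, as noted at the start of this subsection, and is nonempty whenever $X$ is. \Thmref{quadcont} then applies to $\univcov X$ and yields that $\bs\bigl(\univcov X^1\bigr)$ is contractible. Hence $\univcov{X^{+}}$ is contractible.

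Finally I would invoke the standard fact that a connected CW complex whose universal cover is contractible is aspherical: the covering projection induces isomorphisms on higher homotopy groups, so $\pi_n(X^{+}) \isomor \pi_n(\univcov{X^{+}}) = 0$ for $n \ge 2$. Combined with the identification $\pi_1(X^{+}) \isomor \pi_1(X)$ from the first step, this exhibits $X^{+}$ as a compact $K(\pi_1(X),1)$.

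The main obstacle here lies not in this proof but in the two results it quotes. \Lemref{xplbsc} uses torsion-freeness precisely to ensure, via \Corref{immtorsion}, that distinct immersed bisimplex boundaries in the base lift to distinct bisimplices in the universal cover, so that $\univcov{X^{+}}$ really coincides with the flag bisimplicial completion rather than a proper quotient of it. \Thmref{quadcont}, in turn, rests on the collapsibility of metric balls furnished by \Thmref{dismclps}. Granting these, the only delicate point in the present theorem is the passage from contractible universal cover to asphericity, and that is entirely routine.
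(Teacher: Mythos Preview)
Your proposal is correct and follows essentially the same approach as the paper, which simply states that the theorem follows immediately from \Lemref{xplbsc} and \Thmref{quadcont}. You have filled in the routine details—compactness from the construction, $\pi_1(X^{+}) \isomor \pi_1(X)$ from attaching only cells of dimension at least $3$, and asphericity from contractibility of the universal cover—none of which the paper bothers to spell out.
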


\bibliographystyle{abbrv}
\bibliography{nima}
\end{document}